\providecommand\@dotsep{5}
\renewcommand{\listoftodos}[1][\@todonotes@todolistname]{%
  \@starttoc{tdo}{#1}}
\def\DKL{%
  D_{\mathrm{KL}}%
  \@ifnextchar\bgroup{\DKL@withargs}{}%
}
\def\DKL@withargs#1#2{%
  \left( #1 \, \| \, #2 \right)%
}
\def\DTV{%
  D_{\mathrm{TV}}%
  \@ifnextchar\bgroup{\DTV@withargs}{}%
}
\def\DTV@withargs#1#2{%
  \left( #1, #2 \right)%
}
\def\Df{%
  D_{f}%
  \@ifnextchar[{%
    \Df@withargs
  }{}%
}
\def\Df@withargs[#1][#2]{%
  \left( #1 \, \| \, #2 \right)%
}
\newcommand{\R}{\mathbb{R}}
\newcommand{\N}{\mathbb{N}}
\newcommand{\Prob}[1]{\mathbb{P} \left\{ #1 \right\}}
\newcommand{\Ind}[1]{\mathbb{1} \left\{ #1 \right\}}
\DeclareMathOperator{\erf}{erf}
\DeclareMathOperator{\erfi}{erfi}
\DeclareMathOperator{\Ei}{Ei}
\DeclareMathOperator{\Var}{Var}
\DeclareMathOperator{\clow}{c_{\mathrm{lower}}}
\DeclareMathOperator{\nlow}{n_{\mathrm{lower}}}
\DeclareMathOperator{\nup}{n_{\mathrm{upper}}}
\newcommand{\gammaEM}{\gamma_{\mathrm{EM}}}
\newcommand{\COM}[1]{}
\newcommand{\vk}{\bm}
\newtheorem{theorem}{Theorem}
\newtheorem{lemma}{Lemma}
\newtheorem{proposition}{Proposition}
\newtheorem{remark}{Remark}
\newtheorem{corollary}{Corollary}
\DeclareMathOperator\diag{diag}
  \renewcommand*{\bm}[1]{#1}
\def\namedlabel#1#2{\begingroup
  #2%
  \def\@currentlabel{#2}%
  \phantomsection\label{#1}\endgroup
}
\def\subsubsection{\@startsection{subsubsection}{3}%
  \z@{.5\linespacing\@plus.7\linespacing}{.1\linespacing}%
  {\normalfont\itshape}}
\def\subsection{\@startsection{subsection}{2}
  \z@{.5\linespacing\@plus.7\linespacing}{.3\linespacing}
  {\normalfont\bfseries}}
\address{Pavel Ievlev, Department of Actuarial Science,
  University of Lausanne\\
  UNIL-Dorigny, 1015 Lausanne, Switzerland,
  \href{https://orcid.org/0000-0003-3329-5741}{orcid.org/0000-0003-3329-5741}
}
\email{ievlev.pn@gmail.com}
\address{Timofei Shashkov, Department of Actuarial Science,
  University of Lausanne\\
  UNIL-Dorigny, 1015 Lausanne, Switzerland,
  \href{https://orcid.org/0009-0004-1567-0530}{orcid.org/0009-0004-1567-0530}
}
\email{timofei98shashkov@gmail.com}
\thanks{Supported by SNSF Grant 200021–196888.}
\keywords{total variation distance; Kullback-Leibler divergence; multivariate Student distribution; high-dimensional statistics; Gamma distribution;}
\subjclass{60E15, 62H05}
\author{Pavel Ievlev, Timofei Shashkov}
\date{}
\title[Upper and lower bounds on TVD and KLD between centered elliptical...]{Upper and lower bounds on TVD and KLD between centered elliptical distributions in high-dimensional setting}
\begin{document}

\begin{abstract}
In this paper, we derive some upper and lower bounds and inequalities for the
total variation distance (TVD) and the Kullback-Leibler divergence (KLD), also
known as the relative entropy, between two probability measures \( \mu \) and
\( \nu \) defined by
\begin{equation*}
  \DTV{\mu}{\nu} = \sup_{B \in \mathcal{B} (\R^n)}
  \left| \mu(B) - \nu(B) \right|
  \quad \text{and} \quad
  \DKL{\mu}{\nu} = \int_{\R^n}
  \ln \left( \frac{d\mu(x)}{d\nu(x)} \right) \, \mu(dx)
\end{equation*}
correspondingly when the dimension \( n \) is high. We begin with some
elementary bounds for centered elliptical distributions admitting densities and
showcase how these bounds may be used by estimating the TVD and KLD between
multivariate Student and multivariate normal distribution in the
high-dimensional setting. Next, we show how the same approach simplifies when we
apply it to multivariate Gamma distributions with independent components (in the
latter case, we only study the TVD, because KLD may be calculated explicitly,
see~\cite{Penny}). Our approach is motivated by the recent contribution by
Barabesi and Pratelli~\cite{BarabesiPratelli2024}.%
\end{abstract}

\maketitle



\section{Introduction}%
\label{sec:introduction}


Measuring the dissimilarity between two probability measures using total
variation distance (TVD) or Kullback-Leibler divergence (KLD) is prominent
across statistics and machine learning. In a typical application, the two
measures are either (a)~empirical and fitted, (b)~corresponding to two different
models of the same dataset or (c)~corresponding to one model fitted to two
different datasets. In case~(a), these measures are used to estimate how well
the model fits the data. For example, in hypothesis testing TVD provides
so-called minimax lower bounds on both Type I errors (false positives) and Type
II errors (false negatives) over all possible decision rules, see~\cite[Theorem
2.2]{Tsybakov2009}. In case~(b), the two measures assess whether a model may be
switched to a more preferable one within prescribed error bounds. A common
example is to switch true but intractable distribution \( \mu \) to a simple
approximation \( \nu \), such as the Laplace approximation,
see~\cite{dehaene2019,kasprzakHowGoodYour2023,Spokoiny2023a,Spokoiny2023b}, or
variational inference, see~\cite{VariationalInference2017}. In case~(c),
measures \( \DTV \) and \( \DKL \) are used to guarantee some robustness under
changes of the underlying data, such as restriction to a subset, and/or mitigate
overfitting. Since the advent of generative neural networks, the case (c) has
gained a lot of attention. Two notable examples for point~(c) are (i)~assessing
the vulnerability to data poisoning by estimating how much the fitted
distribution is affected by injecting new points into the
dataset~\cite{li2022federatedlearningbaseddefending} and (ii)~providing
quantitative privacy guarantee when the dataset used for training contains
sensitive information, such as a language model trained on private emails. In
both examples, the framework is quite similar: given a randomized algorithm
assigning to a dataset \( \mathcal{D} \) some distribution
\( \mu_{\mathcal{D}} \), check how \( \mu_{\mathcal{D}} \) differs from
\( \mu_{\mathcal{D}'} \) if \( \mathcal{D} \) and \( \mathcal{D}' \) are close.
In the differential privacy setting, the aim is to ensure that an observer
cannot infer whether a particular individual's data was part of the training set
by sampling from \( \mu_{\mathcal{D}} \) by adding as little noise to the model
as possible (utility-privacy trade-off),
see,~\cite{kaissis2023boundingdatareconstructionattacks,RassouliGunduz2020}. In
this context, TVD measures the maximum distance between the output distributions
when a single data point is removed.
\smallskip

The choice between \( \DKL \) and \( \DTV \) in specific applications is mainly
guided by (a) the necessity for a more sensitive metric (\( \DKL \) is weaker
than \( \DTV \) due to Pinsker's inequality
\( \DTV{\mu}{\nu}^2 \leq \DKL{\mu}{\nu}) / 2 \)),
see~\cite{Jia2023,GhaziIssa2024,MachineUnlearning2021}, (b) the fact \( \DKL \)
is frequently easier to compute (for many common distribution families, closed
form expressions are available, see~\cite{NielsenNock2010}), and (c)~the fact
that \( \DTV \) is usually easier to interpret. The sensitivity~(a) is
particularly important in the context of data poisoning and differential
privacy~\cite{Jia2023,GhaziIssa2024} whereas for more generic learning
alrorithms performance~(b) is of more importance. For a general overview of
probability metrics and how to choose between them, see~\cite{GibbsSu2002}.%
\smallskip

The aforementioned applications underscore the necessity of total variation and
KL divergence bounds \textit{in high-dimensional settings} where explicit
formulas are seldom available and Monte Carlo simulations may be prohibitively
costly. This area of research has attracted a lot of attention recently,
see~\cite{Devroye2023,BarabesiPratelli2024}.%
\smallskip

In this paper, we derive some elementary bounds on the TVD and KLD between
centered elliptical distributions (Propositions~\ref{tv_gen} and~\ref{kl_gen})
and them show how they may be applied when one distribution is the multivariate
Student t-distribution and the other is multivariate normal
(Theorems~\ref{main_TVD} and~\ref{main_KL}) or when both distributions are
multivariate Gamma with independent components
(Theorem\textrm{~}\ref{main_gamma}. The bounds of Theorems~\ref{main_TVD}
and~\ref{main_KL} are given in terms of standard special functions, such as the
incomplete regularized Gamma and Beta functions and the Lambert W-function.
Since the bounds work for sufficiently large \( n \), we show how to choose the
threshold (see Lemma~\ref{n0} and condition~\eqref{liminf-condition}). In
Section~\ref{sec:gamma}, we use CLT with Berry-Esseen bounds with respect to \(
n \) for the analysis of TVD between two Gamma laws in a similar way
to~\cite{BarabesiPratelli2024}. 
\smallskip

In principle, the same approach is applicable to any two elliptical
distributions for which the sets \( A_{\pm} \) from Proposition~\ref{tv_gen} may
be described in a convenient form. Moreover, the same approach may be used to
construct bounds on other divergences.
\bigskip

\textbf{Brief organization of the paper.} Section~\ref{sec:elementary_bounds} is
dedicated to some elementary bounds on the TVD and KLD between two elliptical
distributions, which we later showcase in Section~\ref{sec:mult-t-distr} by
estimating the TVD and KLD between multivariate Student and multivariate normal
distribution and in Section~\ref{sec:gamma} by estimating the TVD between two
multivariate Gamma distributions with independent components. Throughout the
paper, longer proofs are relegated to the Appendix.%
\bigskip

\textbf{Notation.}
Throughout the paper, we use the following special functions:
\begin{itemize}
  \item \( W_{0} \) and \( W_{-1} \): branches of the Lambert-W function,
  see~\cite{CorlessEtAl1996};
  \item \( \gamma ( x ; a ) = \int_0^x t^{a - 1} e^{-t} \, dt \) and \( \Gamma (
  x; a ) = \int_x^\infty t^{a-1} e^{-t} \, dt \): lower and upper incomplete
  Gamma function, see, e.g.,~\cite[Chapter 6.5]{AbramowitzStegun};
  \item \( P ( z ; a ) = \gamma ( x ; a ) / \Gamma ( a ) \): lower regularized
  incomplete Gamma function;
  \item \( B ( x ; a, b ) = \int_0^x t^{a-1} (1-t)^{b-1} \, dt \): incomplete
  Beta function;
  \item \( I ( x ; a, b ) = B ( x ; a, b ) / B ( a, b ) \): regularized
  incomplete Beta function;
  \item \( {}_{2} F_{1} \) and \( {}_{2} F_{2} \): hypergeometric functions.
  \item \( \erf ( x ) = \frac{2}{\sqrt{\pi}} \int_0^x e^{-t^2} dt \) and
  \( \erfi ( x ) = -i \erf (x) \): error function and the imaginary error
  function.
  \item \( \Ei ( x ) = \int_{-\infty}^x \frac{e^{t}}{t} \, dt \): exponential integral.
\end{itemize}
References concerning these special functions may be found in~\cite{NIST}.

\section{Elementary bounds on TVD and KLD between two elliptical distributions}%
\label{sec:elementary_bounds}

Let \(\mathcal{E} ( g, \Sigma )\) be a centered elliptical distribution
admitting a pdf given by
\begin{equation*}
  f ( x ) = \frac{1}{\sqrt{\det \Sigma}} \,
  g \left( x^\top \Sigma^{-1} x \right).
\end{equation*}
Here \( g \) is a so-called density generator. For a comprehensive treatment of
elliptical distributions, see~\cite{FangKotz1990,GuptaVarga2013}.%
\smallskip

First, we want to show that calculation of the distances between two elliptical
distributions \( \mathcal{E} ( g_1, \Sigma_1 ) \) and
\( \mathcal{E} ( g_2, \Sigma_2 ) \) may be reduced to the case when
\( \Sigma_1 = I \) and \( \Sigma_2 \) diagonal. Instead of proving it for TV and
DKL separately, we prove it for all \( f \)-divergences (see~\cite{Sason2016})
in~\Cref{covariance_reduction}. Note that a version of this lemma in a general
context is available in~\cite{Nielsen2021}.

\begin{lemma}\label{covariance_reduction}
    Let \(D\) be the eigenvalue matrix of \(\Sigma_2^{-1} \Sigma_1\). Then
    \begin{equation*}
        \Df [ \mathcal{E} ( g_1, \Sigma_1 ) ][\mathcal{E} ( g_2, \Sigma_2 ) ]
        = \Df [ \mathcal{E} ( g_1, I ) ][ \mathcal{E} ( g_2, D ) ],
    \end{equation*}
    where \( \Df \) is the \( f \)-divergence associated to a convex function \( f
    \colon [0, \infty) \to (-\infty, \infty] \) such that \( f(x) < \infty \) for all
    \( x > 0 \) and \( f(1) = 0 \) by
    \begin{equation*}
        \Df [\mu][\nu]
        = \int_{\R^n}
        f \left( \frac{d\mu(x)}{d\nu(x)} \right)
        \, d \nu(x).
    \end{equation*}
\end{lemma}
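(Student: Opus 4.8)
The plan is to reduce the problem to a change of variables. The key observation is that an $f$-divergence is invariant under a common invertible linear transformation applied to both measures, since such a transformation changes both densities by the same Jacobian factor, leaving the Radon–Nikodym derivative $d\mu/d\nu$ — and hence the integrand $f(d\mu/d\nu)$ — unchanged, while the pushforward of $\nu$ under the transformation is the correct measure against which to integrate. So the whole proof amounts to choosing the right linear map and bookkeeping the parameters.

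\textbf{Step 1: invariance under linear maps.} First I would record the (elementary) fact that if $T \in GL_n(\mathbb{R})$ and $T_\sharp$ denotes pushforward, then $\Df [ T_\sharp \mu ][ T_\sharp \nu ] = \Df [ \mu ][ \nu ]$. This follows because $d(T_\sharp\mu)/d(T_\sharp\nu)$ at a point $y$ equals $(d\mu/d\nu)(T^{-1}y)$ (the Jacobians $|\det T|^{-1}$ cancel in the ratio), and then the change of variables $y = Tx$ in the defining integral gives back $\int f(d\mu/d\nu)\,d\nu$. I would also note that the pushforward of an elliptical law behaves well: $T_\sharp \mathcal{E}(g, \Sigma) = \mathcal{E}(g, T\Sigma T^\top)$, because if $X$ has density $(\det\Sigma)^{-1/2} g(x^\top \Sigma^{-1} x)$ then $TX$ has density $(\det\Sigma)^{-1/2}|\det T|^{-1} g(y^\top T^{-\top}\Sigma^{-1}T^{-1} y) = (\det(T\Sigma T^\top))^{-1/2} g(y^\top (T\Sigma T^\top)^{-1} y)$.

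\textbf{Step 2: diagonalize.} Apply the above with $T = \Sigma_1^{-1/2}$ (a symmetric positive-definite square root). This sends $\mathcal{E}(g_1,\Sigma_1)$ to $\mathcal{E}(g_1, I)$ and $\mathcal{E}(g_2,\Sigma_2)$ to $\mathcal{E}(g_2, \Sigma_1^{-1/2}\Sigma_2\Sigma_1^{-1/2})$. Now the matrix $M := \Sigma_1^{-1/2}\Sigma_2\Sigma_1^{-1/2}$ is symmetric positive-definite, so there is an orthogonal $Q$ with $Q^\top M Q = \widetilde D$ diagonal; applying the invariance once more with $T = Q^\top$ (which fixes $I$ since $Q^\top I Q = I$) we reach $\Df[\mathcal{E}(g_1,I)][\mathcal{E}(g_2,\widetilde D)]$. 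It remains to identify $\widetilde D$ with the eigenvalue matrix $D$ of $\Sigma_2^{-1}\Sigma_1$. Indeed, $M = \Sigma_1^{-1/2}\Sigma_2\Sigma_1^{-1/2}$ is similar to $\Sigma_1^{-1}\Sigma_2$, hence its eigenvalues are those of $\Sigma_1^{-1}\Sigma_2 = (\Sigma_2^{-1}\Sigma_1)^{-1}$; so $\widetilde D$ collects the reciprocals of the eigenvalues of $\Sigma_2^{-1}\Sigma_1$. This looks like a mismatch, but the roles of $g_1$ and $g_2$ (equivalently $\mu$ and $\nu$) are not symmetric in the claimed formula, and one should double check against the intended convention; I would resolve the ambiguity by fixing, once and for all, whether $D$ in the statement means the eigenvalues of $\Sigma_2^{-1}\Sigma_1$ or of $\Sigma_1^{-1}\Sigma_2$, and then present Step 2 with $T = \Sigma_2^{-1/2}$ or $T = \Sigma_1^{-1/2}$ accordingly.

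\textbf{Main obstacle.} The mathematics here is routine; the only genuine care needed is (i) the measure-theoretic justification that $d(T_\sharp\mu)/d(T_\sharp\nu) = (d\mu/d\nu)\circ T^{-1}$ holds $T_\sharp\nu$-a.e. without assuming densities — though here we do have densities, so this is immediate — and (ii) getting the diagonal matrix in the statement to come out exactly as written, which is purely a matter of matching the paper's sign/inverse conventions for $D$. I expect (ii) to be where a careless write-up would go wrong, so I would double-check it by testing the one-dimensional case $n=1$ with $\Sigma_1 = \sigma_1^2$, $\Sigma_2 = \sigma_2^2$, where the formula must reduce to $\Df[\mathcal{E}(g_1,1)][\mathcal{E}(g_2, \sigma_2^2/\sigma_1^2)]$ after rescaling by $1/\sigma_1$.
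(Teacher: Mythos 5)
Your proposal is correct and follows essentially the same route as the paper: the paper performs the change of variables \( \vk y = P \Sigma_1^{-1/2} \vk x \) directly in the integral, where \( \Sigma_1^{-1/2}\Sigma_2\Sigma_1^{-1/2} = P^\top D P \), which is exactly your pushforward argument with \( T = \Sigma_1^{-1/2} \) followed by the orthogonal conjugation. The convention mismatch you flagged is genuine: both your argument and the paper's produce the eigenvalue matrix of \( \Sigma_1^{-1/2}\Sigma_2\Sigma_1^{-1/2} \), i.e.\ of \( \Sigma_1^{-1}\Sigma_2 = (\Sigma_2^{-1}\Sigma_1)^{-1} \) (the paper's closing remark even identifies it with \( \Sigma_2\Sigma_1^{-1} \)), whose diagonal entries are the reciprocals of the eigenvalues of the matrix \( \Sigma_2^{-1}\Sigma_1 \) named in the statement, so the statement's \( \Sigma_2^{-1}\Sigma_1 \) should read \( \Sigma_1^{-1}\Sigma_2 \).
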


Applying this lemma with \( f(t) = t \ln t \) and \( f(t) = |t-1|/2 \) yields
the same claim for \( \DKL \) and \( \DTV \).
\smallskip

Next, we shall use the following formula for the total variation distance in
terms of the pdfs: if \( \bm{X}\sim \mathcal{E} ( g_1, I ) \) has pdf \(f_1\)
and \( \bm{Y}\sim\mathcal{E} ( g_2, D )\) has pdf \(f_2\), then
\begin{equation}
  \label{d-in-terms-of-X-and-Y}
  \DTV{\mathcal{E} (g_{1}, I)}{\mathcal{E}(g_{2}, D)}
  = \mathbb{P} \{ f_1 ( \bm{X} ) \geq f_2 ( \bm{X} ) \}
  - \mathbb{P} \{ f_1 ( \bm{Y} ) \geq f_2 ( \bm{Y} ) \}.
\end{equation}

Hereafter we assume that \(d_i = D_{ii}, i = 1,\dots, d\) and there exists
positive \(d_{-}, d_{+}\) such that \(d_{-}\leq d_{i}\leq d_{+}\) for all
\(i=1,\dots, d\).
\smallskip

The following two lemmas contain elementary bounds on TVD and DKL between
elliptical distributions.

\begin{proposition}\label{tv_gen}
  Let \(\bm{X} \sim \mathcal{E} ( g_1, I )\),
  \(\bm{Y} \sim \mathcal{E} ( g_2, D )\).
  \begin{enumerate}[(a)]
    \item If \( D = I \), then
    \begin{equation*}
      \DTV{ \mathcal{E} (g_1, I ) }{ \mathcal{E} (g_2, D) }
      = \frac{ 2 \pi^{n / 2} }{\Gamma (n / 2)}
      \int_{0}^{\infty}
      \mathbb{1} \{ r^2 \in A\}
      \left( g_1 (r^2) - g_2 (r^2) \right)
      \, r^{n-1}
      \, dr
    \end{equation*}
    with \( A = \{ t \in [0,  \infty) \colon g_1(t) \geq g_2(t) \} \).
    \item If \( g_1 \) and \( g_2 \) are decreasing on
          \( [0, \infty) \), then
          \begin{align*}
            &
            \frac{2 \pi^{n / 2}}{\Gamma (n / 2)}
            \int_0^{\infty}
            \Big[
            g_{1} (r^{2}) \, \mathbb{1} \{ r^{2} \in A_{-} \}
            - g_{2} (r^{2}) \, \mathbb{1} \{ d_{-} r^{2} \in A_{+} \}
            \Big]
              r^{n-1} \, dr
            \\[7pt]
            & \hspace{0.5cm}
              \leq \DTV{ \mathcal{E} (g_{1}, I) }{ \mathcal{E}(g_{2}, D) }
            \\[7pt]
            & \hspace{0.5cm}
              \leq
              \frac{2 \pi^{n / 2}}{\Gamma (n / 2)}
              \int_0^{\infty}
              \Big[
              g_{1} (r^{2}) \, \mathbb{1} \{ r^{2} \in A_{+} \}
              - g_{2} (r^{2}) \, \mathbb{1} \{ d_{+} r^{2} \in A_{-} \}
              \Big]
              r^{n-1} \, dr
          \end{align*}
    with
    \begin{equation}
      \label{def:Apm}
      A_\pm = \left\{
        t \in [0, \infty) :
        \prod_{i = 1}^n d_i^{1 / 2} g_1(t)
        \geq g_2 \left( \frac{t}{d_\mp} \right)
      \right\}.
    \end{equation}
  \end{enumerate}
\end{proposition}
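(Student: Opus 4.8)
Part~(a) is a direct computation: when \( D = I \) both densities are radial, \( f_1(x) = g_1(|x|^2) \) and \( f_2(x) = g_2(|x|^2) \), so in the representation of the total variation distance as \( \int_{\R^n}(f_1-f_2)_+\,dx \) the optimal set is exactly \( \{ x \in \R^n : |x|^2 \in A \} \); passing to polar coordinates and inserting \( 2\pi^{n/2}/\Gamma(n/2) \) for the surface area of the unit sphere yields the claimed formula, with nothing delicate involved. For part~(b) I would invoke the decomposition~\eqref{d-in-terms-of-X-and-Y}, which writes \( \DTV{\mathcal{E}(g_1,I)}{\mathcal{E}(g_2,D)} = \mathbb{P}\{ f_1(\bm{X}) \ge f_2(\bm{X}) \} - \mathbb{P}\{ f_1(\bm{Y}) \ge f_2(\bm{Y}) \} \), where \( f_1(x) = g_1(|x|^2) \) and \( f_2(x) = \big(\prod_{i} d_i^{1/2}\big)^{-1} g_2\big(\sum_{i} x_i^2/d_i\big) \), and then bound the two probabilities separately by sandwiching the optimal set \( \{ f_1 \ge f_2 \} \) between spherically described sets.

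For the \( \bm{X} \)-term, the bounds \( |x|^2/d_+ \le \sum_i x_i^2/d_i \le |x|^2/d_- \) together with the monotonicity of \( g_2 \) give \( g_2(|x|^2/d_-) \le g_2\big(\sum_i x_i^2/d_i\big) \le g_2(|x|^2/d_+) \); comparing with~\eqref{def:Apm} this shows \( \{ |x|^2 \in A_- \} \subseteq \{ f_1 \ge f_2 \} \subseteq \{ |x|^2 \in A_+ \} \). Integrating the density \( f_1 \) of \( \bm{X} \) over these two sets and passing to polar coordinates sandwiches \( \mathbb{P}\{ f_1(\bm{X}) \ge f_2(\bm{X}) \} \) between \( \frac{2\pi^{n/2}}{\Gamma(n/2)}\int_0^\infty g_1(r^2)\,\mathbb{1}\{r^2 \in A_-\}\,r^{n-1}\,dr \) and the same integral with \( A_+ \) in place of \( A_- \).

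For the \( \bm{Y} \)-term, I would pass to \( \bm{Z} = D^{-1/2}\bm{Y} \sim \mathcal{E}(g_2, I) \), which has density \( g_2(|z|^2) \); then \( f_1(\bm{Y}) = g_1\big(\sum_i d_i Z_i^2\big) \) and \( f_2(\bm{Y}) = \big(\prod_i d_i^{1/2}\big)^{-1} g_2(|\bm{Z}|^2) \). Since \( d_-|\bm{Z}|^2 \le \sum_i d_i Z_i^2 \le d_+|\bm{Z}|^2 \), the monotonicity of \( g_1 \) gives, by exactly the same reasoning, \( \{ d_+|\bm{Z}|^2 \in A_- \} \subseteq \{ f_1(\bm{Y}) \ge f_2(\bm{Y}) \} \subseteq \{ d_-|\bm{Z}|^2 \in A_+ \} \), so that integrating \( g_2 \) and passing to polar coordinates sandwiches \( \mathbb{P}\{ f_1(\bm{Y}) \ge f_2(\bm{Y}) \} \) between \( \frac{2\pi^{n/2}}{\Gamma(n/2)}\int_0^\infty g_2(r^2)\,\mathbb{1}\{d_+ r^2 \in A_-\}\,r^{n-1}\,dr \) and the same integral with \( d_- r^2 \in A_+ \).

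Finally, in \( \DTV = \mathbb{P}\{ f_1(\bm{X}) \ge f_2(\bm{X}) \} - \mathbb{P}\{ f_1(\bm{Y}) \ge f_2(\bm{Y}) \} \) I would overestimate the first probability (using \( A_+ \)) and underestimate the second (using \( d_+ \) and \( A_- \)) to obtain the upper bound, and make the opposite choices for the lower bound. I expect the only genuine difficulty to be bookkeeping: fixing the direction of each of the four one-sided estimates, and hence keeping straight which of \( A_+, A_- \) and which of \( d_+, d_- \) appears in each of them. In particular, because~\eqref{def:Apm} defines \( A_\pm \) as arbitrary subsets of \( [0,\infty) \) and not as intervals, all four set containments must be extracted pointwise from the monotonicity of \( g_1 \) and \( g_2 \); there is no interval structure to fall back on.
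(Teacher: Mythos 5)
Your proposal is correct and follows essentially the same route as the paper: part~(b) is proved there exactly as you describe, by starting from the decomposition~\eqref{d-in-terms-of-X-and-Y}, sandwiching the set \( \{ f_1 \geq f_2 \} \) between \( \{ |x|^2 \in A_- \} \) and \( \{ |x|^2 \in A_+ \} \) for the \( \bm{X} \)-term and between \( \{ d_+ \sum_i Y_i^2/d_i \in A_- \} \) and \( \{ d_- \sum_i Y_i^2/d_i \in A_+ \} \) for the \( \bm{Y} \)-term via the monotonicity of \( g_1, g_2 \), and then passing to polar coordinates (your substitution \( \bm{Z} = D^{-1/2}\bm{Y} \) is just the change of variables the paper performs inside the integral). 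The only cosmetic difference is in part~(a), which you compute directly from \( \int (f_1 - f_2)_+ \, dx \) while the paper cites Theorem~2.9 of Fang--Kotz; your four set containments and the final choice of which bound to use for each probability all match the paper's.
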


\begin{proposition}\label{kl_gen}
  Let \(\vk X\sim \mathcal{E} ( g_1, I ) \) and
  \(\vk Y\sim \mathcal{E} ( g_2, D )\).
  \begin{enumerate}[(a)]
    \item If \(D=I\), then
    \begin{equation*}
      \DKL{ \mathcal{E}(g_{1},I) }{ \mathcal{E} (g_{2},D) }
      = \frac{2\pi^{n/2}}{\Gamma(n/2)}
      \int_{0}^{\infty}
      \ln\left(
        \frac{g_1(r^2)}{g_2(r^2)}
      \right) \, g_1(r^2) \,
      r^{n-1} \, dr.
    \end{equation*}
    \item If \( g_2 \) is decreasing on \( [0, \infty) \), then:
    \begin{align*}
      &
      \frac{2\pi^{n/2}}{\Gamma(n/2)}
      \int_{0}^{\infty} g_1(r^2)
      \ln\left(
        \frac{g_1(r^2)}{g_2\left(r^2/d_{+}\right)}
      \right)
      \, r^{n-1} \, dr
      +\frac{1}{2} \sum_{i=1}^{n}
      \ln d_i
      \\[7pt]
      & \hspace{0.5cm}
      \leq \DKL{ \mathcal{E}(g_1,I) }{ \mathcal{E}(g_2,D) }
      \\[7pt]
      & \hspace{0.5cm}
      \leq \frac{2\pi^{n/2}}{\Gamma(n/2)}
      \int_{0}^{\infty}g_1(r^2)
      \ln\left(
        \frac{g_1(r^2)}{g_2\left(r^2/d_{-}\right)}
      \right)
      \, r^{n-1} \, dr
      +\frac{1}{2} \sum_{i=1}^{n} \ln d_i,
    \end{align*}
  \end{enumerate}
\end{proposition}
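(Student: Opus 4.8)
The plan is to compute \(\DKL{\mathcal{E}(g_1,I)}{\mathcal{E}(g_2,D)}\) directly from its integral definition, peel off the constant term, and estimate the remaining radial integral. Write \(f_1(x) = g_1(|x|^2)\) for the density of \(\bm{X}\) and \(f_2(x) = (\det D)^{-1/2} g_2(x^\top D^{-1} x)\) for the density of \(\bm{Y}\), so that
\begin{equation*}
  \ln\left( \frac{f_1(x)}{f_2(x)} \right)
  = \frac12 \ln \det D + \ln\left( \frac{g_1(|x|^2)}{g_2(x^\top D^{-1} x)} \right).
\end{equation*}
Since \(f_1\) integrates to \(1\) and \(\frac12 \ln \det D = \frac12 \sum_{i=1}^n \ln d_i\) is a constant, integrating this identity against \(f_1\) gives
\begin{equation*}
  \DKL{\mathcal{E}(g_1,I)}{\mathcal{E}(g_2,D)}
  = \frac12 \sum_{i=1}^n \ln d_i
  + \int_{\R^n} \ln\left( \frac{g_1(|x|^2)}{g_2(x^\top D^{-1} x)} \right) g_1(|x|^2) \, dx ,
\end{equation*}
which already isolates the additive term appearing in both parts. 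For part~(a), taking \(D = I\) makes the integrand a function of \(|x|^2\) alone and kills the constant; passing to polar coordinates through \(\int_{\R^n} h(|x|^2)\, dx = \frac{2\pi^{n/2}}{\Gamma(n/2)} \int_0^\infty h(r^2)\, r^{n-1}\, dr\), the prefactor being the surface area of \(S^{n-1}\), yields the stated identity.

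For part~(b), I would start from the bound \(|x|^2/d_+ \le x^\top D^{-1} x \le |x|^2/d_-\), valid for every \(x \in \R^n\) because the eigenvalues of \(D^{-1}\) lie in \([1/d_+, 1/d_-]\). Since \(g_2\) is decreasing this gives \(g_2(|x|^2/d_-) \le g_2(x^\top D^{-1} x) \le g_2(|x|^2/d_+)\); taking reciprocals, multiplying by \(g_1(|x|^2) \ge 0\), and applying \(\ln\) — operations that together interchange the roles of \(d_-\) and \(d_+\) — leads to
\begin{equation*}
  \ln\left( \frac{g_1(|x|^2)}{g_2(|x|^2/d_+)} \right)
  \le \ln\left( \frac{g_1(|x|^2)}{g_2(x^\top D^{-1} x)} \right)
  \le \ln\left( \frac{g_1(|x|^2)}{g_2(|x|^2/d_-)} \right).
\end{equation*}
Multiplying through by the nonnegative weight \(g_1(|x|^2)\), integrating over \(\R^n\) (so that only monotonicity of the integral is used, with no interchange of limits), adding back \(\frac12 \sum_{i=1}^n \ln d_i\), and rewriting the two outer integrals in polar coordinates exactly as in part~(a) produces the claimed two-sided bound.

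The computation is routine. The one place to be genuinely careful is the direction of the inequalities as they pass through the decreasing generator \(g_2\) and the increasing logarithm; a secondary, purely technical point is to justify the additive split of the KL integral into the constant plus the radial integral — immediate when the divergence is finite, which is the regime relevant to the applications in Section~\ref{sec:mult-t-distr}, and otherwise to be read as an inequality in \([0, +\infty]\). I do not expect any real obstacle beyond this bookkeeping.
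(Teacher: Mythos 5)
Your proposal is correct and follows essentially the same route as the paper: split off the constant $\tfrac12\sum_i\ln d_i$, bound $\sum_i x_i^2/d_i$ between $|x|^2/d_+$ and $|x|^2/d_-$, use monotonicity of $g_2$ and of the logarithm, and pass to polar coordinates. The only cosmetic difference is that for part~(a) the paper simply cites a known result while you carry out the (standard) radial-integration computation directly.
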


\section{Multivariate Student t-distribution against multivariate normal distribution}%
\label{sec:mult-t-distr}
The pdfs of the centered multivariate \( t \)-distribution with \( \Sigma = I \)
and of the multivariate normal distribution with
\( \Sigma = D = \diag ( d_i ) \) are given by
\begin{equation}\label{def:pdfs}
  f_1 ( x ) =
  \frac{\Gamma ( ( \nu + n ) / 2 )}{\Gamma ( \nu / 2 ) \, \nu^{n/2} \pi^{n / 2}}
  \left[ 1 + \frac{1}{\nu} \sum_{i = 1}^n x_i^2 \right]^{-( \nu + n ) / 2}
  \quad \text{and} \quad
  f_2 ( x ) =
  \frac{1}{( 2 \pi )^{n/2} \prod_{i = 1}^n d_i^{1/2} }
  \exp \left( -\frac{1}{2} \sum_{i = 1}^n \frac{x_i^2}{d_i} \right).
\end{equation}

In this section, we apply~\Cref{tv_gen} and~\Cref{kl_gen} to obtain lower and upper
bounds on the total variation distance and KL divergence between these two
distributions.

\subsection{Total variation distance}%
\label{sec:main_TV}

To apply \Cref{tv_gen}, we need first to find a convenient description of the
sets
\[
  A_{\pm}=\left\{t\in[0,\infty) :
    \left(\prod_{i=1}^{n}d_i\right)^{1/2}
    \frac{\Gamma ( ( \nu + n ) / 2 )}{\Gamma ( \nu / 2 ) \, \nu^{n/2} \pi^{n / 2}}
    \left[ 1 + \frac{t}{\nu} \right]^{-( \nu + n ) / 2}\geq
    \frac{1}{(2\pi)^{n/2}}e^{-\frac{t}{2d_{\mp}}}\right\}.
\]
Rewriting \( A_{\pm}\) as follows
\begin{equation*}
  A_{\pm}
  =
  \left\{
    x \colon
    \left( 1 + \frac{t}{\nu} \right)^{( \nu + n ) / 2}
    \exp \left( -\frac{t}{2d_{\mp}} \right)
    \leq
    \frac{\Gamma ( ( \nu + n ) / 2 ) 
    \cdot ( 2 \pi )^{n/2}}{ \Gamma ( \nu / 2 ) \nu^{n/2} \pi^{n/2} }
    \prod_{i = 1}^n d_i^{1/2}
  \right\}
\end{equation*}
and denoting
\begin{equation}
  \label{c-def}
  c =
  \frac{\Gamma ( ( \nu + n ) / 2 ) \cdot ( 2 \pi )^{n/2}}{ \Gamma ( \nu / 2 ) \nu^{n/2} \pi^{n/2} }
  \prod_{i = 1}^n d_i^{1/2}
  = \frac{\Gamma ( ( \nu + n ) / 2 )}{ \Gamma ( \nu / 2 ) } \left( \frac{2}{\nu} \right)^{n/2}
  \prod_{i = 1}^n d_i^{1/2},
\end{equation}
we find that both sets \( A_{+} \) and \(A_{-}\) are sublevel sets of a function
\( \varphi_{\alpha, \gamma} ( x ) = (1 + x)^{\alpha} e^{-\gamma x} \):
\begin{equation}
  \label{A-pm-def}
  A_{\pm}
  = \left\{
    t\colon
    \varphi_{\alpha, \gamma_{\pm}}
    \left( \frac{t}{\nu}  \right) \leq c
  \right\},
  \quad \text{where} \quad
  \alpha = \frac{\nu + n}{2}, \quad \gamma_{\pm} = \frac{\nu}{2 d_{\mp}}.
\end{equation}
In the following lemma we simplify the description of \( A_{\pm} \) further.%

\begin{lemma}\label{sublevel-lemma}
  Let \( \varphi_{\alpha, \gamma} ( x ) = ( 1 + x )^{\alpha} e^{-\gamma x} \)
  and \( \alpha, \, \gamma, \, c > 0 \). Then there exist two numbers
  \( a_{\alpha, \gamma} ( c ) < b_{\alpha, \gamma} ( c ) \) such that the
  sublevel set
  \begin{equation*}
    D_{\alpha, \gamma} ( c ) \coloneqq
    \{ x \geq 0 \colon \varphi_{\alpha, \gamma} ( x ) \leq c \}
  \end{equation*}
  admits the following representation:
  \begin{equation}
    \label{eq:D-representation}
    D_{\alpha, \gamma} ( c )
    = \left\{
      x \geq 0 \colon
      x \leq a_{\alpha, \gamma} ( c ) \ \text{or} \ x \geq b_{\alpha, \gamma} ( c )
    \right\}.
  \end{equation}
  Moreover,
  \begin{itemize}
    \item If either of the following cases occurs,
    \begin{itemize}
      \item[(a)] \( \gamma > \alpha \),
      \item[(b)] \( \gamma \leq \alpha \) and \( 0 < c < 1 \),
    \end{itemize}
    then \( a_{\alpha, \gamma} ( c ) < 0 \), and therefore
    \( D_{\alpha, \gamma} ( c ) = [ b_{\alpha, \gamma} ( c ) , \infty) \).
    \item If either of the following cases occurs,
    \begin{itemize}
      \item[(c)] \( \gamma > \alpha \) and \( c \geq 1 \),
      \item[(d)] \( \gamma \leq \alpha \) and \( c \geq e^{\gamma} ( \alpha / \gamma e )^{\alpha} \),
    \end{itemize}
    then \( b_{\alpha, \gamma} ( c ) \leq 0 \), and therefore
    \( D_{\alpha, \gamma} ( c ) = [0, \infty) \).
    \item If \( \gamma \leq \alpha \) and
    \( 1 \leq c < e^{\gamma} ( \alpha / \gamma e )^{\alpha} \), then
    \( a_{\alpha, \gamma} ( c ) > 0 \) and therefore
    \( D_{\alpha, \gamma} ( c ) = [0, a_{\alpha, \gamma} ( c )] \cup [b_{\alpha, \gamma} ( c ), \infty) \).
  \end{itemize}
\end{lemma}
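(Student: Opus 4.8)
\emph{Proof idea.} The plan is to analyse $\varphi_{\alpha, \gamma}$ on $[0,\infty)$ through its logarithm $\psi(x) = \ln \varphi_{\alpha, \gamma}(x) = \alpha \ln(1+x) - \gamma x$, whose derivative $\psi'(x) = \tfrac{\alpha}{1+x} - \gamma = \tfrac{(\alpha - \gamma) - \gamma x}{1+x}$ is positive for $x < x^{\ast} := \alpha/\gamma - 1$ and negative for $x > x^{\ast}$. Hence $\varphi_{\alpha,\gamma}$ is unimodal, and the whole argument splits according to whether the mode $x^{\ast}$ lies in $[0,\infty)$ — that is, according to the sign of $\alpha - \gamma$, which is exactly why the hypotheses are phrased in terms of this comparison. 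I would also record once and for all that $1 + x^{\ast} = \alpha/\gamma$, so that $\varphi_{\alpha,\gamma}(x^{\ast}) = (\alpha/\gamma)^{\alpha} e^{-\gamma(\alpha/\gamma - 1)} = e^{\gamma}(\alpha/(\gamma e))^{\alpha} =: M$, while $\varphi_{\alpha,\gamma}(0) = 1$ and $\varphi_{\alpha,\gamma}(x) \to 0$ as $x \to \infty$.

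First I would treat $\gamma \ge \alpha$, i.e. $x^{\ast} \le 0$: then $\varphi_{\alpha,\gamma}$ is strictly decreasing on $[0,\infty)$ from $1$ down to $0$. If $c \ge 1$ the sublevel set is all of $[0,\infty)$; if $0 < c < 1$, strict monotonicity and the intermediate value theorem give a unique $b_0 > 0$ with $\varphi_{\alpha,\gamma}(b_0) = c$ and $D_{\alpha,\gamma}(c) = [b_0,\infty)$. In either situation one may pick $a_{\alpha,\gamma}(c) < 0$ (and, when $c \ge 1$, also $b_{\alpha,\gamma}(c) \le 0$); this gives \eqref{eq:D-representation} and, since for $\gamma = \alpha$ the threshold $e^{\gamma}(\alpha/(\gamma e))^{\alpha}$ collapses to $1$, it also verifies the ``moreover'' assertions in cases (a), (c) and in the $\gamma = \alpha$ instances of (b), (d).

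Next I would treat $\gamma < \alpha$, i.e. $x^{\ast} > 0$: here $\varphi_{\alpha,\gamma}$ strictly increases on $[0,x^{\ast}]$ from $1$ to its global maximum $M$ (note $M \ge \varphi_{\alpha,\gamma}(0) = 1$) and strictly decreases on $[x^{\ast},\infty)$ back to $0$. Splitting on $c$: if $c \ge M$ then $D_{\alpha,\gamma}(c) = [0,\infty)$, so $b_{\alpha,\gamma}(c) \le 0$ works, matching (d); if $0 < c < 1$ the increasing branch contributes nothing (there $\varphi_{\alpha,\gamma} \ge 1 > c$) and the decreasing branch contributes a single $b_0 > x^{\ast}$, so $D_{\alpha,\gamma}(c) = [b_0,\infty)$ with $a_{\alpha,\gamma}(c) < 0$, matching (b); and if $1 \le c < M$ each branch contributes one root, $a_0 \in [0,x^{\ast})$ on the increasing branch and $b_0 \in (x^{\ast},\infty)$ on the decreasing one, whence $D_{\alpha,\gamma}(c) = [0,a_0] \cup [b_0,\infty)$ with $a_0 < x^{\ast} < b_0$ — this is the last bullet (with $a_0 > 0$ once $c > 1$, and $a_0 = 0$ exactly at the boundary $c = 1$).

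The individual steps are routine calculus; the only delicate point is the bookkeeping — making sure my natural trichotomy (sign of $x^{\ast}$, then position of $c$ relative to $1$ and to $M$) lines up with the six conditions of the lemma, and in particular that the borderline $\gamma = \alpha$ is absorbed correctly because $M$ degenerates to $1$ there. I would also emphasise that $a_{\alpha,\gamma}(c)$ and $b_{\alpha,\gamma}(c)$ are not canonical: only the set $D_{\alpha,\gamma}(c)$ is determined, so in the half-line and full-line cases one is free to place the unused endpoint at any negative (respectively non-positive) value.
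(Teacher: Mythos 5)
Your proposal is correct, and it reaches the conclusion by a genuinely more elementary route than the paper. You argue purely by calculus: the log-derivative $\psi'(x)=\alpha/(1+x)-\gamma$ shows $\varphi_{\alpha,\gamma}$ is unimodal with mode $x^{\ast}=\alpha/\gamma-1$ and maximum $M=e^{\gamma}(\alpha/(\gamma e))^{\alpha}$, and the intermediate value theorem on each monotone branch then locates the roots and determines the structure of $D_{\alpha,\gamma}(c)$ in every case. The paper instead solves $\varphi_{\alpha,\gamma}(x)=c$ in closed form by rewriting it as $ye^{y}=z$ and applying the two Lambert branches $W_{0}$ and $W_{-1}$, after verifying $z\in[-1/e,0]$; the signs of $a_{\alpha,\gamma}(c)$ and $b_{\alpha,\gamma}(c)$ are then read off from inequalities such as $W_{0}(z)\ge -1$ and $W_{0}(z)\ge\sqrt{1+ez}-1$. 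What the paper's heavier machinery buys is precisely the explicit formulas of Remark~\ref{Lambert}, which are not a by-product but a necessity: $a_{\alpha,\gamma_{\pm}}(c)$ and $b_{\alpha,\gamma_{\pm}}(c)$ enter the bounds of Theorem~\ref{main_TVD} and must be computable. Your argument establishes the lemma as stated but would have to be supplemented by the Lambert-W computation to recover the Remark. One genuine merit of your version: you correctly observe that at the boundary $c=1$ (with $\gamma<\alpha$) one has $a_{\alpha,\gamma}(1)=0$ rather than $a_{\alpha,\gamma}(1)>0$, a point the lemma's last bullet overstates and the paper's own proof only establishes as $a_{\alpha,\gamma}(c)\ge 0$ for $1<c<M$; this does not affect the set representation, since $0\in D_{\alpha,\gamma}(1)$ anyway.
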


\begin{remark}\label{Lambert}
  The aforementioned numbers \(a_{\alpha, \gamma} ( c )\) and
  \(b_{\alpha, \gamma} ( c )\) can be calculated explicitly using branches
  \(W_{-1}\) and \(W_{0}\) of the Lambert function:
  \begin{equation}\label{a b}
    a_{\alpha, \gamma} ( c )
    = -1 -\frac{\alpha}{\gamma} \,
    W_0 \left( -\frac{\gamma c^{1/\alpha} e^{-\gamma / \alpha} }{\alpha} \right)
    \ \ \ \text{ and } \ \ \
    b_{\alpha, \gamma} ( c )
    = -1 -\frac{\alpha}{\gamma} \,
    W_{-1} \left( -\frac{\gamma c^{1/\alpha} e^{-\gamma / \alpha} }{\alpha} \right).
  \end{equation}
\end{remark}

Consider the following function:
\begin{equation}\label{c_down}
  \clow(n)
  = \frac{\Gamma ( ( \nu + n ) / 2 )}{ \Gamma ( \nu / 2 ) }
  \left( \frac{2}{\nu} \right)^{n/2}d_{-}^{n/2}.
\end{equation}
Clearly, \( c \geq \clow(n) \) for all \(n\in\mathbb{N}\). The following three
lemmas study the behaviour of the function \( \clow(n) \).
\begin{lemma}\label{ndown}%
  If \( \clow(1) \geq 1\), then \( \clow \) is strictly increasing as a function
  on \(\N\).
\end{lemma}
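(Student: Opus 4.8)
The plan is to reduce the statement to the strict log-convexity of the sequence $\clow(n)$ and to bring in the hypothesis $\clow(1)\ge 1$ only at the very end.

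First I would write the consecutive ratio
\[
  R(n) \coloneqq \frac{\clow(n+1)}{\clow(n)}
  = \frac{\Gamma\big((\nu+n+1)/2\big)}{\Gamma\big((\nu+n)/2\big)}
  \left(\frac{2 d_-}{\nu}\right)^{1/2},
\]
so that, since all $\clow(n)$ are positive, $\clow$ being strictly increasing on $\N$ is equivalent to $R(n)>1$ for every $n\ge 1$. The factor $(2d_-/\nu)^{1/2}$ is a fixed positive constant, so everything is governed by the Gamma ratio, i.e. by the map $x\mapsto\Gamma(x+\tfrac12)/\Gamma(x)$ evaluated at $x=(\nu+n)/2$.

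Next I would check that $n\mapsto\ln\clow(n)$ is strictly convex. Writing
\[
  \ln\clow(n) = \ln\Gamma\big((\nu+n)/2\big) - \ln\Gamma(\nu/2) + \tfrac n2\ln(2 d_-/\nu),
\]
the last two terms are affine in $n$, while $n\mapsto\ln\Gamma\big((\nu+n)/2\big)$ is strictly convex because $\ln\Gamma$ is strictly convex on $(0,\infty)$. Strict log-convexity of the sequence $\clow(n)$ delivers two facts at once: (i) the ratios $R(n)$ are strictly increasing in $n$; and (ii) $\clow(n-1)\,\clow(n+1)>\clow(n)^2$ for every $n\ge 1$. Taking $n=1$ in (ii), and noting that the defining formula gives $\clow(0)=1$, we obtain $\clow(2)>\clow(1)^2$.

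Finally I would combine these with the hypothesis: since $\clow(1)\ge 1$ we have $\clow(1)\le\clow(1)^2$, so (ii) at $n=1$ gives $\clow(2)>\clow(1)^2\ge\clow(1)$, that is $R(1)>1$; then (i) yields $R(n)\ge R(1)>1$ for all $n\ge 1$, which is exactly $\clow(n+1)>\clow(n)$ for all $n\ge 1$. I do not expect a genuine obstacle here: the only substantive ingredient is strict convexity of $\ln\Gamma$ on $(0,\infty)$, which is classical (equivalently, the trigamma function $\psi'(x)=\sum_{k\ge 0}(x+k)^{-2}$ is positive). If one prefers to avoid invoking log-convexity of $\clow$ as a sequence, step (ii) can be replaced by the elementary bound $\Gamma(x+\tfrac12)^2\le\Gamma(x)\Gamma(x+1)=x\,\Gamma(x)^2$ at $x=\nu/2$, which gives $\clow(1)\le\sqrt{d_-}=\sqrt{\clow(2)}$ directly (note $\clow(2)=d_-$). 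The one point worth flagging is that the hypothesis $\clow(1)\ge 1$ — rather than merely $\clow(1)>0$ — enters exactly once, in passing from $\clow(1)$ to $\clow(1)^2$, and it cannot be dropped since without it $\clow$ need not be monotone.
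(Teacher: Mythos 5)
Your proof is correct, and it takes a genuinely different route from the paper's. The paper splits into the two parity cases and shows directly that \( \clow(2k+1)/\clow(2k) > 1 \) and \( \clow(2k)/\clow(2k-1) > 1 \), expanding each half-integer Gamma ratio as an explicit product \( \prod_l (1 + 1/(\nu+2l)) \) (resp.\ \( \prod_l (1+1/(\nu+2l+1)) \)) and invoking the hypothesis \( \clow(1)\geq 1 \) in each of the two cases, together with the log-convexity inequality \( \Gamma(x)\Gamma(x+1)\geq\Gamma(x+1/2)^2 \) for the odd-to-even step. You instead observe that \( n\mapsto\ln\clow(n) \) is strictly convex (since \( \ln\Gamma \) is), so the consecutive ratios \( R(n)=\clow(n+1)/\clow(n) \) are strictly increasing, and the whole lemma reduces to the single base case \( R(1)>1 \); that base case is exactly the same Gamma log-convexity inequality, in the strict form \( \Gamma(\nu/2+1/2)^2<\Gamma(\nu/2)\Gamma(\nu/2+1) \), combined with \( \clow(1)\geq 1 \) via \( \clow(2)>\clow(1)^2\geq\clow(1) \). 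Your argument is more structural and avoids the parity bookkeeping, at the cost of extending the formula to \( \clow(0)=1 \) (harmless, and you also give the direct alternative \( \clow(1)^2\le d_-=\clow(2) \)); it also isolates cleanly where the hypothesis enters. One small point: in that alternative you should keep the \emph{strict} inequality \( \Gamma(x+1/2)^2<\Gamma(x)\Gamma(x+1) \) (valid because \( \ln\Gamma \) is strictly convex), since the non-strict version only yields \( R(1)\geq 1 \) and hence non-strict monotonicity at the first step; your main line of argument already does this correctly.
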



\begin{lemma}\label{n0}%
  Let $n_0 = 2 k$, where $k$ satisfies
  \begin{equation}
    \label{k-condition}
    \frac{1}{k} \sum_{l = 0}^{k-1}
    \ln \left( 1 + \frac{2 l}{\nu} \right)
    \geq -\ln d_-.
  \end{equation}
  Then $\clow(n_0) \geq 1$.
\end{lemma}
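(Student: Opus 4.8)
The plan is to compute $\clow(n_0)$ explicitly at the even value $n_0 = 2k$ and then reduce the desired inequality $\clow(2k) \geq 1$ to hypothesis~\eqref{k-condition} by taking logarithms; no estimates are needed, only the functional equation of the Gamma function. First I would note that at $n = 2k$ one has $(\nu + n)/2 = \nu/2 + k$, so that $\clow(2k) = \dfrac{\Gamma(\nu/2 + k)}{\Gamma(\nu/2)}\left(\dfrac{2}{\nu}\right)^{k} d_-^{k}$ by definition~\eqref{c_down}. Telescoping the Gamma ratio via $\Gamma(z+1) = z\,\Gamma(z)$ turns it into the Pochhammer product $\Gamma(\nu/2+k)/\Gamma(\nu/2) = \prod_{l=0}^{k-1}(\nu/2 + l)$, and absorbing the factor $(2/\nu)^k$ into this product yields
\[
  \clow(2k)
  = d_-^{k}\prod_{l=0}^{k-1}\frac{\nu + 2l}{\nu}
  = d_-^{k}\prod_{l=0}^{k-1}\left(1 + \frac{2l}{\nu}\right),
\]
a finite product of manifestly positive terms.

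Next I would take logarithms of this identity to get $\ln \clow(2k) = k \ln d_- + \sum_{l=0}^{k-1}\ln\!\left(1 + 2l/\nu\right)$. Since $k \geq 1$, the inequality $\clow(2k) \geq 1$ is equivalent to $\ln \clow(2k) \geq 0$, which after dividing by $k$ is exactly
\[
  \frac{1}{k}\sum_{l=0}^{k-1}\ln\left(1 + \frac{2l}{\nu}\right) \geq -\ln d_-,
\]
i.e. condition~\eqref{k-condition}. This closes the argument.

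There is essentially no obstacle: the statement is a direct computation, the only mild point being the telescoping of the Gamma quotient into the Pochhammer product, which is routine. It is worth adding one sentence of commentary, though: because each summand $\ln(1 + 2l/\nu)$ is nonnegative, the left-hand side of~\eqref{k-condition} is nondecreasing in $k$, so the condition is automatically satisfied once $k$ is large enough; in particular a valid threshold $n_0$ always exists, and together with Lemma~\ref{ndown} this is what makes $\clow$ exceed $1$ from $n_0$ on.
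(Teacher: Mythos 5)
Your proof is correct and follows essentially the same route as the paper's: rewrite $\clow(2k)$ as $d_-^k\prod_{l=0}^{k-1}(1+2l/\nu)$ via the telescoped Gamma quotient and take logarithms, reducing the claim to condition~\eqref{k-condition}. Your closing remark on the existence of a valid $n_0$ mirrors the remark the paper makes immediately after the lemma.
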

\begin{proof}[Proof of \Cref{n0}]
  Let $n_0 = 2k$. By~\eqref{c_down},
  \begin{equation*}
    \clow(n_0)
    = \frac{\Gamma \left( \frac{\nu}{2} + k \right)}{\Gamma \left( \frac{\nu}{2}  \right)}
    \left( \frac{2}{\nu} \right)^{k}
    d_{-}^{k}
    =
    \frac{2^k}{\nu^k}
    \prod_{l = 0}^{k-1} \left( \frac{\nu}{2} + l \right)
    d_{-}^{k}
    =
    \prod_{l = 0}^{k-1} \left( 1 + \frac{2l}{\nu} \right)
    d_{-}^{k}.
  \end{equation*}
  Hence, if $k$ satisfies~\eqref{k-condition}, we have
  \begin{equation*}
    \ln \clow(n_0)
    = k \ln d_-
    + \sum_{l = 0}^{k-1} \ln \left( 1 + \frac{2l}{\nu} \right)
    \geq 0
    \iff 
    \clow(n_0) \geq 1.
  \end{equation*}
\end{proof}

\begin{remark}
  Note that since the sequence
  $k^{-1} \sum_{l = 0}^{k-1} \ln ( 1 + 2l / \nu ) \sim \ln k\to\infty$ as
  $k \to \infty$, such $n_0$ always exists.
\end{remark}

Note that if \(d_{-}\geq 1\), then one can check that \(\clow(1)\geq 1\) and by
\Cref{ndown} we obtain that \(\clow(n) \geq 1\) for all \(n\in\N\), and, in
particular, \(n_0=2\). The next lemma discusses the case \(d_{-}<1\).

\begin{lemma}\label{minimum}%
  If \(d_{-}< 1\), then the function \(\clow(x)\) has one minimum
  \(x_{\min}>\frac{\nu}{d_{-}}-\nu\) in \([0,\infty)\) and, moreover,
  \(\clow(x)\) strictly decreases as \(x<x_{\min}\) and strictly increases as
  \(x>x_{\min}\).
\end{lemma}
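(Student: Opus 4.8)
The plan is to treat $\clow$ as a smooth function of a real variable $x\ge 0$ through the Gamma function, pass to logarithms, and analyse the sign of the derivative, which will be governed by the digamma function $\psi=(\ln\Gamma)'$.

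First I would use~\eqref{c_down} to write
\[
  \clow(x)
  = \frac{\Gamma\big((\nu+x)/2\big)}{\Gamma(\nu/2)}\Big(\frac{2d_-}{\nu}\Big)^{x/2},
  \qquad
  \ln\clow(x)
  = \ln\Gamma\Big(\frac{\nu+x}{2}\Big)
    - \ln\Gamma\Big(\frac{\nu}{2}\Big)
    + \frac{x}{2}\ln\frac{2d_-}{\nu},
\]
so that
\[
  \big(\ln\clow\big)'(x)
  = \frac12\,\psi\Big(\frac{\nu+x}{2}\Big) + \frac12\ln\frac{2d_-}{\nu}.
\]
Since $\psi'(t)=\sum_{k\ge 0}(t+k)^{-2}>0$, the digamma function is strictly increasing on $(0,\infty)$ with range all of $\R$; hence $x\mapsto(\ln\clow)'(x)$ is strictly increasing on $[0,\infty)$ and has a unique zero $x_{\min}$, characterised by $\psi\big((\nu+x_{\min})/2\big)=\ln(\nu/2d_-)$. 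Because $(\ln\clow)'$ is strictly increasing and vanishes only at $x_{\min}$, it is negative on $[0,x_{\min})$ and positive on $(x_{\min},\infty)$; as $\exp$ is increasing, $\clow$ is strictly decreasing on $[0,x_{\min})$ and strictly increasing on $(x_{\min},\infty)$. This already gives the monotonicity assertion and shows that the minimum is unique.

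It remains to show $x_{\min}$ is interior and satisfies $x_{\min}>\nu/d_- - \nu$. For this I would invoke the classical bound $\psi(t)<\ln t$ valid for all $t>0$ (which follows, e.g., from $\psi(t)=\ln t-\frac{1}{2t}-2\int_0^\infty s\,[(s^2+t^2)(e^{2\pi s}-1)]^{-1}\,ds$). Writing $u=(\nu+x_{\min})/2$, the defining relation is $\psi(u)=\ln(\nu/2d_-)$. Since $d_-<1$ we have $-\ln d_->0$, so $\psi(\nu/2)<\ln(\nu/2)<\ln(\nu/2)-\ln d_-=\ln(\nu/2d_-)=\psi(u)$, and monotonicity of $\psi$ yields $u>\nu/2$, i.e.\ $x_{\min}>0$. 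Similarly, $\psi(\nu/2d_-)<\ln(\nu/2d_-)=\psi(u)$ forces $u>\nu/2d_-$, i.e.\ $x_{\min}=2u-\nu>\nu/d_- - \nu$, as claimed.

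The argument is largely routine; the only genuinely external input is the elementary inequality $\psi(t)<\ln t$, and the one mildly delicate point is that this single inequality must be used twice — once to put the stationary point to the right of $\nu/2$ (so that it lies in $[0,\infty)$ and is an interior minimum rather than a boundary value) and once more to obtain the quantitative lower bound $x_{\min}>\nu/d_- - \nu$.
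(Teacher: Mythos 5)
Your proposal is correct and follows essentially the same route as the paper: both differentiate (you via $\ln\clow$, the paper by factoring out a positive function $f(x)$, which is equivalent), both use strict monotonicity of the digamma function to get uniqueness of the critical point, and both apply the bound $\psi_0(t)<\ln t$ twice --- once at $t=\nu/2$ to place $x_{\min}$ in the interior, and once at the critical point to obtain $x_{\min}>\nu/d_- -\nu$. The only cosmetic difference is that you characterise $x_{\min}$ via the equation $\psi_0\bigl((\nu+x_{\min})/2\bigr)=\ln(\nu/2d_-)$ and compare values of $\psi_0$, whereas the paper checks the sign of $\clow'$ at $0$ and at $x_{\min}$ directly.
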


\begin{corollary}\label{cor}%
  \(n_{0}> \nu/d_{-}-\nu\), where \(n_0\) is as in \Cref{n0}.
\end{corollary}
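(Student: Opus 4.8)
The plan is to distinguish the two regimes $d_{-} \geq 1$ and $d_{-} < 1$ already separated in the discussion above. If $d_{-} \geq 1$, then, as noted there, $\clow(1) \geq 1$, so \Cref{ndown} gives $\clow(n) \geq 1$ for every $n \in \N$ and hence $n_{0} = 2$; since in this regime $\nu/d_{-} - \nu = \nu(1/d_{-} - 1) \leq 0 < 2 = n_{0}$, the claimed inequality is immediate.

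The substance is the case $d_{-} < 1$, and here I would argue purely by monotonicity. First I record that $\clow(0) = 1$: setting $n = 0$ in \eqref{c_down} makes each of the three factors $\Gamma(\nu/2)/\Gamma(\nu/2)$, $(2/\nu)^{n/2}$ and $d_{-}^{n/2}$ equal to $1$. Next, \Cref{minimum} tells us that $\clow$ is strictly decreasing on $[0, x_{\min}]$, so $\clow(x) < \clow(0) = 1$ for every $x \in (0, x_{\min})$. On the other hand, \Cref{n0} gives $\clow(n_{0}) \geq 1$, and $n_{0} = 2k$ with $k \geq 1$ (the defining condition \eqref{k-condition} is not meaningful, and in any case fails, for $k = 0$ when $d_{-} < 1$), so $n_{0} > 0$. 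Combining these, $n_{0}$ cannot lie in $(0, x_{\min})$, whence $n_{0} \geq x_{\min}$; and since \Cref{minimum} also asserts $x_{\min} > \nu/d_{-} - \nu$, we conclude $n_{0} \geq x_{\min} > \nu/d_{-} - \nu$.

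I do not anticipate a genuine obstacle: the argument is a short squeeze between $\clow(0) = 1$, the strict monotonicity supplied by \Cref{minimum}, and the inequality $\clow(n_{0}) \geq 1$ of \Cref{n0}. The only two points requiring a line of care are the evaluation $\clow(0) = 1$ and the observation that $n_{0}$ is strictly positive (equivalently $k \geq 1$), since both are needed to place $n_{0}$ outside the interval $(0, x_{\min})$ on which $\clow$ stays below $1$.
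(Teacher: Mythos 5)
Your proof is correct. It rests on the same two pillars as the paper's argument --- the monotonicity and location of the minimum from \Cref{minimum}, and the inequality \(\clow(n_0)\geq 1\) from \Cref{n0} --- but you assemble them differently: you argue directly, anchoring at the evaluation \(\clow(0)=1\) and observing that the strict decrease on \([0,x_{\min})\) forces \(\clow<1\) there, so that \(n_0\) (being positive with \(\clow(n_0)\geq 1\)) must lie at or beyond \(x_{\min}>\nu/d_{-}-\nu\). The paper instead argues by contradiction: assuming \(n_0\leq \nu/d_{-}-\nu\) it places \(1<2\leq n_0\) inside the decreasing region, deduces \(\clow(1)>\clow(2)\geq\clow(n_0)\geq 1\), and then invokes \Cref{ndown} (which gives \(\clow(1)<\clow(2)\) once \(\clow(1)\geq 1\)) to reach a contradiction. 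Your route is slightly cleaner in that it dispenses with \Cref{ndown} entirely and replaces the contradiction by a one-line squeeze; the only extra ingredient is the (trivial) evaluation \(\clow(0)=1\) of the continuous extension already used implicitly in \Cref{minimum}. Your preliminary case split on \(d_{-}\geq 1\) and the remark that \(k\geq 1\) (hence \(n_0\geq 2>0\)) are both needed and correctly handled.
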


\begin{proof}[Proof of \Cref{cor}]
  Assume the opposite. Since \(n_0\geq 2\), using \Cref{minimum} we conclude
  that \(x_{\min}\geq \nu/d_{-}-\nu\geq 2\) and, therefore, by \Cref{n0}
  \[
    \clow(1) > \clow(2) \geq \clow(n_0) \geq 1.
  \]
  Hence by \Cref{ndown}, we conclude that \( \clow(1) < \clow(2) \), which
  contradicts the inequality above.
\end{proof}
\begin{lemma}\label{n1}%
  \(c<e^{\gamma_{-}} ( \alpha / \gamma_{-} e )^{\alpha}\) for all \(n\geq 1.\)
\end{lemma}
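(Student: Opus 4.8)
The plan is to replace the quantity \( c \) from~\eqref{c-def} by the larger number
\[
  c_{\mathrm{upper}}(n) \;=\; \frac{\Gamma((\nu+n)/2)}{\Gamma(\nu/2)}\left(\frac{2 d_+}{\nu}\right)^{n/2},
\]
obtained by bounding each \( d_i \le d_+ \) in the product \( \prod_i d_i^{1/2} \). Since the right-hand side \( e^{\gamma_-}(\alpha/\gamma_- e)^{\alpha} \) of the claimed inequality involves only \( \nu \), \( n \) and \( d_+ \) (recall \( \gamma_- = \nu/(2d_+) \) and \( \alpha = (\nu+n)/2 \) from~\eqref{A-pm-def}), it suffices to prove \( c_{\mathrm{upper}}(n) < e^{\gamma_-}(\alpha/\gamma_- e)^{\alpha} \). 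I would then set \( \beta \coloneqq \nu/2 \), so that \( \alpha = \beta + n/2 \), \( \gamma_- = \beta/d_+ \) and \( 2d_+/\nu = 1/\gamma_- \); taking logarithms and cancelling the term \( \alpha\ln\gamma_- \) that appears on both sides reduces the target to
\[
  \beta\ln\gamma_- - \gamma_- \;<\; \alpha\ln\alpha - \alpha - \ln\Gamma(\alpha) + \ln\Gamma(\beta).
\]

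Next I would introduce \( g(\alpha) \coloneqq \alpha\ln\alpha - \alpha - \ln\Gamma(\alpha) \) and note that \( g'(\alpha) = \ln\alpha - \psi(\alpha) > 0 \) for every \( \alpha > 0 \) by the standard digamma inequality \( \psi(\alpha) < \ln\alpha \) (equivalently, \( \alpha^{\alpha}e^{-\alpha}/\Gamma(\alpha) \) is increasing). Since \( n \ge 1 \) forces \( \alpha = \beta + n/2 > \beta \), this gives \( g(\alpha) > g(\beta) = \beta\ln\beta - \beta - \ln\Gamma(\beta) \), hence
\[
  \alpha\ln\alpha - \alpha - \ln\Gamma(\alpha) + \ln\Gamma(\beta) \;>\; \beta\ln\beta - \beta,
\]
and it remains to check the \( \alpha \)-free inequality \( \beta\ln\gamma_- - \gamma_- \le \beta\ln\beta - \beta \). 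Substituting \( \gamma_- = \beta/d_+ \) turns this into \( -\beta\ln d_+ - \beta/d_+ \le -\beta \), i.e.\ \( \ln d_+ + 1/d_+ \ge 1 \), which is just the elementary bound \( \ln x \ge 1 - 1/x \) (valid for all \( x > 0 \)) applied at \( x = d_+ \). Chaining the strict inequality for \( g \) with this last (non-strict) inequality yields \( c \le c_{\mathrm{upper}}(n) < e^{\gamma_-}(\alpha/\gamma_- e)^{\alpha} \) for all \( n \ge 1 \).

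I do not expect a real obstacle here; the only points requiring care are the algebraic bookkeeping in the logarithmic reduction — expressing \( \gamma_- \), \( \alpha \) and the exponent \( n/2 \) consistently in terms of \( \beta \) and \( d_+ \) so that the \( \alpha\ln\gamma_- \) terms cancel exactly — and keeping track of strictness: the inequality \( g(\alpha) > g(\beta) \) is strict because \( g \) is strictly increasing and \( \alpha > \beta \), which is what upgrades the overall bound to a strict one even in the boundary case \( d_+ = 1 \), where \( \ln d_+ + 1/d_+ = 1 \). The single external fact invoked, monotonicity of \( \alpha \mapsto \alpha^{\alpha}e^{-\alpha}/\Gamma(\alpha) \) via \( \psi(\alpha) < \ln\alpha \), is classical.
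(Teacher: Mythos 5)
Your proof is correct, and while it rests on the same key analytic fact as the paper's argument, it is organized differently and ends up more economical. The paper likewise reduces to the worst case \( d_i = d_+ \) (via the decreasing factor \( \prod_i (d_i/d_+)^{1/2} \le 1 \)) and then shows that \( c / \bigl( e^{\gamma_-} (\alpha/\gamma_- e)^{\alpha} \bigr) \) is decreasing in \( n \); the derivative computation there is, after a change of variables, exactly your statement that \( g(\alpha) = \alpha \ln \alpha - \alpha - \ln \Gamma(\alpha) \) is increasing, proved from the same digamma bound \( \psi_0(x) < \ln x \). The difference is the anchor: the paper anchors the monotonicity at \( n = 1 \) and must then verify that case separately, which costs it three further inequalities (\( \Gamma(x+1/2) \le x^{1/2}\Gamma(x) \), \( x e^{1/x} \ge e \), and \( (1+1/x)^{x+1} > e \)). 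You instead run the monotonicity of \( g \) all the way from \( \alpha \) down to \( \beta = \nu/2 \) (formally the case \( n = 0 \)), where the residual inequality collapses to \( \ln d_+ + 1/d_+ \ge 1 \), i.e.\ the elementary \( \ln x \ge 1 - 1/x \). This eliminates the base-case computation entirely and makes the strictness bookkeeping transparent, since the only strict step is \( g(\alpha) > g(\beta) \) coming from \( \alpha > \beta \) for \( n \ge 1 \). The single external fact you invoke is one the paper already uses elsewhere (in the proofs of \Cref{minimum} and \Cref{n1}), so your route is a genuine simplification of the second half of the argument.
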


\begin{remark}
  It turns out that without any additional information on \(d_i\) both cases
  \(c\geq e^{\gamma_{+}} ( \alpha / \gamma_{+} e )^{\alpha}\) and
  \(c<e^{\gamma_{+}} ( \alpha / \gamma_{+} e )^{\alpha}\) are possible even for
  large \(n\). Indeed, one can check that
  \[
    \frac{c}{e^{\gamma_{-} } 
    ( \alpha / \gamma_{-}  e )^{\alpha}}
    \sim 
    Cn^{-1/2}
    \prod_{i=1}^{n}
    \left(\frac{d_{i}}{d_{-}}\right)^{1/2}
  \]
  for some positive constant \(C\) as \(n\to\infty\). Therefore, if we take, for
  example, a sequence \(d_{1}=1, d_{2}=\dots=d_{n}=2\), then for large \(n\) we
  have that \(c> e^{\gamma_{+}} ( \alpha / \gamma_{+} e )^{\alpha}\). On the
  other hand, if \(d_{1}=\dots=d_{n}=d_{-}=d_{+}\), then \(c<e^{\gamma_{+}} (
  \alpha / \gamma_{+} e )^{\alpha}\) for \(n\) large enough.
\end{remark}

Hereafter we assume \(n_0\) to be the minimal possible integer, which satisfies
conditions of the \Cref{n0}. Now we discuss an important corollary from
\Cref{sublevel-lemma}, \Cref{ndown}, \Cref{n0}, \Cref{minimum}, \Cref{n1} and
\Cref{cor}.

\begin{corollary}\label{corollary}
  Under conditions above for all \(n\geq n_0\) and \( ( d_i, \ i \geq 1 ) \)
  holds
  \[
    D_{\alpha, \gamma_-} ( c )
    = [ 0, a_{\alpha, \gamma_-} ( c ) ] \cup [ b_{\alpha, \gamma_-} ( c ), \infty ),
    \quad 0 < a_{\alpha, \gamma_-} ( c ) < b_{\alpha, \gamma_-} ( c ).
  \]
  If there exists \(\nup >0\) such that for \(n\geq \nup\)
  \begin{equation}\label{liminf-condition}
    c< e^{\gamma_{+}} ( \alpha / \gamma_{+} e )^{\alpha},
  \end{equation}
  then for all \( n\geq \max(n_0, \nup )\) holds
  \begin{equation*}
    D_{\alpha, \gamma_+} ( c )
    = [ 0, a_{\alpha, \gamma_+} ( c ) ] \cup [ b_{\alpha, \gamma_+} ( c ), \infty),
    \quad 0 < a_{\alpha, \gamma_+} ( c ) < b_{\alpha, \gamma_+} ( c ).
  \end{equation*}
  If, on the other hand, there exists \(\nlow\) such that for \(n\geq
  \nlow\)
  \begin{equation}\label{limsup-condition}
    c \geq e^{\gamma_{+}} ( \alpha / \gamma_{+} e )^{\alpha},
  \end{equation}
  then for all \( n\geq \text{max}(\nlow, n_0)\)
  \begin{equation*}
    D_{\alpha, \gamma_+} ( c ) = [0, \infty).
  \end{equation*}
\end{corollary}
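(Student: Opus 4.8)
The plan is to derive every assertion of the corollary from \Cref{sublevel-lemma}, applied with \( \alpha = (\nu+n)/2 \) and \( \gamma = \gamma_{\pm} = \nu/(2 d_{\mp}) \). Checking its hypotheses amounts to three facts, all to hold for \( n \ge n_0 \): (i) \( \gamma_{\pm} \le \alpha \); (ii) \( c \ge 1 \); and (iii) the location of \( c \) relative to the threshold \( e^{\gamma_{\pm}} ( \alpha / \gamma_{\pm} e )^{\alpha} \), which is precisely \( \max_{x \ge 0} \varphi_{\alpha, \gamma_{\pm}}(x) \) when \( \gamma_{\pm} \le \alpha \). Fact (i) is immediate from \Cref{cor}: since \( n_0 > \nu/d_- - \nu \) and \( d_- \le d_+ \), for \( n \ge n_0 \) we get \( \gamma_+ = \nu/(2 d_-) < (\nu+n)/2 = \alpha \) and a fortiori \( \gamma_- \le \gamma_+ < \alpha \). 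Fact (iii) for \( \gamma_- \) is exactly \Cref{n1}, while for \( \gamma_+ \) it is the content of the hypotheses~\eqref{liminf-condition} and~\eqref{limsup-condition}. So the only thing that needs an argument is fact (ii).

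For (ii) I would first note \( c \ge \clow(n) \) because \( d_i \ge d_- \), and then show \( \clow(n) \ge 1 \) for \emph{all} integers \( n \ge n_0 \), strengthening \Cref{n0}, which only gives this at \( n = n_0 \). If \( \clow(1) \ge 1 \), then \( \clow \) is strictly increasing on \( \N \) by \Cref{ndown}, so \( \clow(n) \ge \clow(n_0) \ge 1 \). If \( \clow(1) < 1 \), then necessarily \( d_- < 1 \) (as \( d_- \ge 1 \) is known to force \( \clow(1) \ge 1 \)), so \Cref{minimum} applies and \( \clow \) is strictly decreasing up to its unique minimiser \( x_{\min} \) and strictly increasing afterwards. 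Since \( \clow(0) = 1 \) and \( n_0 \ge 2 \), the inequality \( n_0 \le x_{\min} \) would give \( \clow(n_0) < \clow(0) = 1 \), contradicting \Cref{n0}; hence \( n_0 > x_{\min} \), and \( \clow \) increasing on \( [x_{\min}, \infty) \) yields \( \clow(n) \ge \clow(n_0) \ge 1 \) for \( n \ge n_0 \). Because this argument uses only \( \nu \) and \( d_- \), it is uniform over all admissible \( (d_i) \).

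It then remains to assemble. For \( \gamma_- \): for \( n \ge n_0 \) we have \( \gamma_- \le \alpha \) and \( 1 \le c < e^{\gamma_-} ( \alpha / \gamma_- e )^{\alpha} \), so the last bullet of \Cref{sublevel-lemma} gives \( a_{\alpha, \gamma_-}(c) > 0 \) and \( D_{\alpha, \gamma_-}(c) = [0, a_{\alpha, \gamma_-}(c)] \cup [b_{\alpha, \gamma_-}(c), \infty) \), with \( a_{\alpha, \gamma_-}(c) < b_{\alpha, \gamma_-}(c) \) coming from the representation~\eqref{eq:D-representation}; this is the first assertion. For \( \gamma_+ \) under~\eqref{liminf-condition}: for \( n \ge \max(n_0, \nup) \) we have \( \gamma_+ \le \alpha \), \( c \ge 1 \), and \( c < e^{\gamma_+} ( \alpha / \gamma_+ e )^{\alpha} \), and the same bullet gives the claimed decomposition with \( 0 < a_{\alpha, \gamma_+}(c) < b_{\alpha, \gamma_+}(c) \). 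For \( \gamma_+ \) under~\eqref{limsup-condition}: for \( n \ge \max(\nlow, n_0) \) we have \( \gamma_+ \le \alpha \) and \( c \ge e^{\gamma_+} ( \alpha / \gamma_+ e )^{\alpha} \), i.e. case~(d) of \Cref{sublevel-lemma}, so \( b_{\alpha, \gamma_+}(c) \le 0 \) and \( D_{\alpha, \gamma_+}(c) = [0, \infty) \).

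The main obstacle is step (ii): everything else is a bookkeeping exercise of matching the hypotheses of the lemmas already established. The delicate point is that \Cref{n0} controls \( \clow \) only at the single point \( n_0 \), so one must exclude the possibility that \( \clow \) dips back below \( 1 \) for some \( n > n_0 \); the case split on \( \clow(1) \) via \Cref{ndown} and \Cref{minimum}, pinned down by the elementary identity \( \clow(0) = 1 \), is precisely what rules this out.
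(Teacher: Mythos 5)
Your proposal is correct and follows essentially the same route as the paper: verify the hypotheses of \Cref{sublevel-lemma} via \Cref{n1} (for the \( \gamma_- \) threshold), \Cref{cor} (for \( \alpha > \gamma_{\pm} \)), and the case split on \( \clow(1) \) using \Cref{ndown}, \Cref{n0} and \Cref{minimum} (for \( c \geq 1 \)). Your treatment of the case \( \clow(1) < 1 \), pinning down \( n_0 > x_{\min} \) via \( \clow(0) = 1 \), merely makes explicit a step the paper leaves as ``it follows that''.
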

\begin{proof}
  By \Cref{sublevel-lemma}, it is sufficient to check that \(c< e^{\gamma_{-}} (
  \alpha / \gamma_{-} e )^{\alpha}\), \(\alpha>\gamma_\pm\) and \(c>1\) for all
  \(n\geq n_0\). By \Cref{n1}, we have that \(c< e^{\gamma_{-}} ( \alpha /
  \gamma_{-} e )^{\alpha}\) for all \(n\in\N\). One can check that
  \(\alpha>\gamma_{\pm}\iff n>\nu/d_{\mp}-\nu\). By \Cref{cor}, we obtain that
  \(n\geq n_0> \nu/d_{-}-\nu\geq\nu/d_{+}-\nu.\) Hence, \(\alpha>\gamma_\pm\)
  for all \(n\geq n_0.\)

  Finally, if \(\clow(1)\geq 1\), then by \Cref{ndown} \(c(n)>1\) for all \(n\).
  If \(\clow(1) < 1\), then from \Cref{n0} and \Cref{minimum} it follows that
  \(\clow(n)\geq 1\) for all \(n\geq n_0\). Thus, the proof follows.
\end{proof}

From \Cref{corollary} it follows that
\( A_{ - } \): by~\eqref{eq:D-representation},
\begin{equation*}
  A_- =
  \left\{
    t\in[0,\infty)\colon
    t
    \leq \nu a_{\alpha, \gamma_-} ( c )
  \right\}
  \cup
  \left\{
    t\in[0,\infty)\colon
    t
    \geq \nu b_{\alpha, \gamma_-} ( c )
  \right\}.
\end{equation*}
Similarly, for \( A_+ \) we have
\begin{equation*}
  A_+ =
  \left\{
    t\in[0,\infty)\colon
    t
    \leq \nu a_{\alpha, \gamma_+} ( c )
  \right\}
  \cup
  \left\{
    t\in[0,\infty)\colon
    t
    \geq \nu b_{\alpha, \gamma_+} ( c )
  \right\},
\end{equation*}
if the condition~\eqref{liminf-condition} is satisfied and
\( A_+ = [0,\infty)\), if the condition~\eqref{limsup-condition} is satisfied.
Combining this description with \Cref{tv_gen}, we obtain bounds for the total
variation distance.

\begin{theorem}\label{main_TVD} Let \( \mathcal{E} ( g_1, I ) \) and \(
  \mathcal{E} ( g_2, D ) \) be the multivariate centered t-distribution and the
  multivariate centered normal distribution defined by their
  densities~\eqref{def:pdfs}.
  \begin{enumerate}[1.]
    \item If \Cref{limsup-condition} is satisfied, then for \(n\geq \max(n_0,
    \nlow)\)
    \[
      \DTV{ \mathcal{E}(g_1,I) }{ \mathcal{E}(g_2,D) }
      \leq
      P \left( \frac{\nu b_{\alpha, \gamma_-}}{2 d_+} ; \frac{n}{2} \right)
      - P \left( \frac{\nu a_{\alpha, \gamma_-}}{2 d_+} ; \frac{n}{2} \right) 
    \]
    \item If~\eqref{liminf-condition} is satisfied, then for
    \(n\geq\max(n_0, \nup )\) there are the following bounds for
    \(\DTV{ \mathcal{E}(g_1,I)}{\mathcal{E}(g_2,D)}\):
    \begin{align*}
      &
      \DTV{ \mathcal{E}(g_1,I) }{ \mathcal{E}(g_2,D) }
      \\[7pt]
      & \hspace{1cm}
        \leq
        P \left( \frac{\nu b_{\alpha, \gamma_-}}{2 d_+} ; \frac{n}{2} \right)
        - P \left( \frac{\nu a_{\alpha, \gamma_-}}{2 d_+} ; \frac{n}{2} \right) 
      - I \left( \frac{1}{1 + \nu b_{\alpha, \gamma_+}^2} ; \frac{\nu}{2}, \frac{n}{2} \right) 
      + I \left( \frac{1}{1 + \nu a_{\alpha, \gamma_+}^2} ; \frac{\nu}{2}, \frac{n}{2} \right) 
    \end{align*}
    and
    \begin{align*}
      &
      \DTV{ \mathcal{E}(g_1,I) }{ \mathcal{E}(g_2,D) }
      \\[7pt]
      & \hspace{1cm} \geq
        P \left( \frac{\nu b_{\alpha, \gamma_+}}{2 d_-} ; \frac{n}{2} \right) 
        - P \left( \frac{\nu a_{\alpha, \gamma_+}}{2 d_-} ; \frac{n}{2} \right) 
        - I \left( \frac{1}{1 + \nu b^2_{\alpha, \gamma_-}} ; \frac{\nu}{2} ; \frac{n}{2} \right) 
        + I \left( \frac{1}{1 + \nu a^2_{\alpha, \gamma_-}} ; \frac{\nu}{2} ; \frac{n}{2} \right) 
    \end{align*}
    where \( P \) is the lower regularized incomplete Gamma function and \( I \)
    is the regularized incomplete Beta function.
  \end{enumerate}
\end{theorem}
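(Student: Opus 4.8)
The plan is to substitute the explicit density generators into \Cref{tv_gen}(b) and convert the resulting radial integrals into values of the regularized incomplete Gamma and Beta functions. First one records that \Cref{tv_gen}(b) is applicable: the $t$-generator $g_1(t)=\frac{\Gamma((\nu+n)/2)}{\Gamma(\nu/2)\nu^{n/2}\pi^{n/2}}\left(1+t/\nu\right)^{-(\nu+n)/2}$ and the Gaussian generator $g_2(t)=(2\pi)^{-n/2}e^{-t/2}$ (the factor $\prod_i d_i^{1/2}$ having been absorbed into $\sqrt{\det D}$) are both strictly decreasing on $[0,\infty)$. Then \Cref{corollary}, available precisely because $n\ge n_0$ (together with $n\ge\nlow$ or $n\ge\nup$ in the respective case), describes the geometry of the sets: $A_-=[0,\nu a_{\alpha,\gamma_-}(c)]\cup[\nu b_{\alpha,\gamma_-}(c),\infty)$ with $0<a_{\alpha,\gamma_-}(c)<b_{\alpha,\gamma_-}(c)$, and $A_+$ is either of the same two-interval type (under \eqref{liminf-condition}) or all of $[0,\infty)$ (under \eqref{limsup-condition}).

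Next I would evaluate the radial integrals appearing in \Cref{tv_gen}(b). The integral $\frac{2\pi^{n/2}}{\Gamma(n/2)}\int_0^\infty g_1(r^2)\,\mathbb{1}\{r^2\in S\}\,r^{n-1}\,dr$ equals $\mathbb{P}\{|\bm{X}|^2\in S\}$ for $\bm{X}\sim\mathcal{E}(g_1,I)$, and the substitution $u=\frac{r^2/\nu}{1+r^2/\nu}$ identifies the radial law: $\frac{|\bm{X}|^2}{\nu+|\bm{X}|^2}\sim\mathrm{Beta}(n/2,\nu/2)$ (equivalently $|\bm{X}|^2/n\sim F_{n,\nu}$), so that $\mathbb{P}\{|\bm{X}|^2\le s\}=I\!\left(\frac{s}{\nu+s};\frac{n}{2},\frac{\nu}{2}\right)$ and $\mathbb{P}\{|\bm{X}|^2\ge s\}=I\!\left(\frac{\nu}{\nu+s};\frac{\nu}{2},\frac{n}{2}\right)$. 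Similarly $\frac{2\pi^{n/2}}{\Gamma(n/2)}\int_0^\infty g_2(r^2)\,\mathbb{1}\{d_\pm r^2\in S\}\,r^{n-1}\,dr$ equals $\mathbb{P}\{|\bm{Z}|^2\in S/d_\pm\}$ for $|\bm{Z}|^2\sim\chi^2_n$, with $\mathbb{P}\{|\bm{Z}|^2\le x\}=P(x/2;n/2)$. Feeding the two-interval description of $A_\mp$ into these, each indicator integral becomes a difference of two $P$-values at the points $\nu b_{\alpha,\gamma}(c)/(2d_\pm)$ and $\nu a_{\alpha,\gamma}(c)/(2d_\pm)$, respectively a difference of two $I$-values; the positivity $a_{\alpha,\gamma_\pm}(c)>0$ from \Cref{corollary} ensures all arguments lie in the natural domains of these special functions.

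Finally I would substitute into both inequalities of \Cref{tv_gen}(b) and split into the two cases of the theorem. If \eqref{limsup-condition} holds and $n\ge\max(n_0,\nlow)$, then $A_+=[0,\infty)$, so $\mathbb{P}\{|\bm{X}|^2\in A_+\}=1$ and the upper bound collapses to $1-\mathbb{P}\{d_+|\bm{Z}|^2\in A_-\}=P\!\left(\frac{\nu b_{\alpha,\gamma_-}}{2d_+};\frac{n}{2}\right)-P\!\left(\frac{\nu a_{\alpha,\gamma_-}}{2d_+};\frac{n}{2}\right)$, which is part~1. If \eqref{liminf-condition} holds and $n\ge\max(n_0,\nup)$, then $A_+$ is two-interval as well, and carrying out the substitution in the upper bound (with $A_+$ in the $g_1$-term and $A_-$, scaled by $d_+$, in the $g_2$-term) and in the lower bound (with $A_-$ in the $g_1$-term and $A_+$, scaled by $d_-$, in the $g_2$-term) produces the stated combinations of $P(\,\cdot\,;n/2)$ and $I(\,\cdot\,;\nu/2,n/2)$ terms.

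I do not expect a genuine conceptual obstacle here: once \Cref{tv_gen} and \Cref{corollary} are in hand, the argument is bookkeeping. The one delicate point is the change of variables identifying the radial distribution of the multivariate $t$-law with a Beta/$F$-law, and then matching the Lambert-$W$ expressions $a_{\alpha,\gamma_\pm}(c),\,b_{\alpha,\gamma_\pm}(c)$ of \Cref{Lambert} against the arguments of the incomplete Beta and Gamma functions, keeping straight which interval endpoint, and which of $d_-,d_+$, enters the upper versus the lower bound in each case.
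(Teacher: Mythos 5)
Your proposal follows the paper's proof essentially step for step: apply \Cref{tv_gen}(b) with the explicit generators, invoke \Cref{corollary} to write \( A_\pm = [0,\nu a_{\alpha,\gamma_\pm}(c)]\cup[\nu b_{\alpha,\gamma_\pm}(c),\infty) \) (or \( A_+=[0,\infty) \) under~\eqref{limsup-condition}), convert the Gaussian radial integrals into differences of \( P(\,\cdot\,;n/2) \) via the \( \chi^2_n \) radial law, and the \( t \)-radial integrals into differences of \( I(\,\cdot\,;\nu/2,n/2) \); the case split into parts 1 and 2 is identical to the paper's.

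One caveat on the bookkeeping you defer to the end. Your (correct) identity \( \Prob{|\bm{X}|^2\ge s}=I\left(\tfrac{\nu}{\nu+s};\tfrac{\nu}{2},\tfrac{n}{2}\right) \), applied to \( A_+^c=(\nu a_{\alpha,\gamma_+},\nu b_{\alpha,\gamma_+}) \) viewed as a set of values of \( |\bm{X}|^2 \), produces Beta arguments \( 1/(1+a_{\alpha,\gamma_\pm}) \) and \( 1/(1+b_{\alpha,\gamma_\pm}) \), whereas the statement you are proving displays \( 1/(1+\nu a^2_{\alpha,\gamma_\pm}) \) and \( 1/(1+\nu b^2_{\alpha,\gamma_\pm}) \). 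The paper's own computation arrives at the latter by integrating \( g_1(r^2)r^{n-1} \) over \( r\in(\nu a,\nu b) \) rather than over \( r^2\in(\nu a,\nu b) \), i.e.\ the limits \( \sqrt{\nu a},\sqrt{\nu b} \) are replaced by \( \nu a,\nu b \) before the substitution \( t=r^2/\nu \). So if you carry out your substitution carefully you will not reproduce the displayed formula verbatim; you should reconcile this discrepancy (it affects only the arguments of \( I \), not the structure of the bounds or the \( P \)-terms) rather than assert that the stated combinations follow automatically.
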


\begin{remark}
  Since \( x \mapsto I ( x ; a, b ) \) is increasing, the upper bound in point
  (2) of the last theorem is lower than that of the point (1).
\end{remark}

\begin{figure}
  \centering
  \subfigure{\includegraphics[width = 0.3\textwidth]{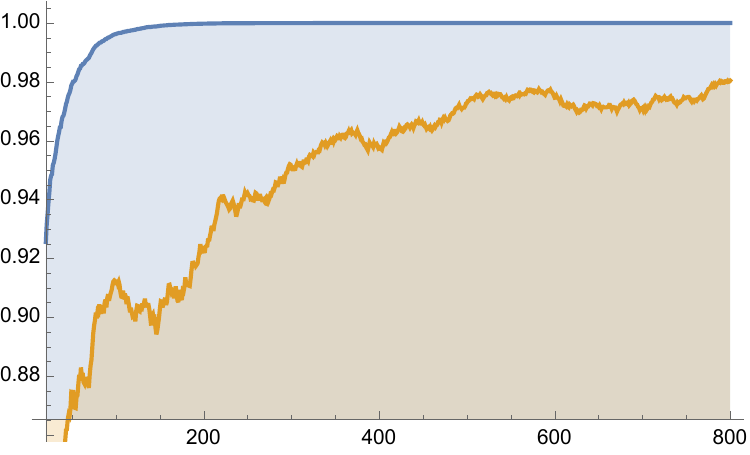}} 
  \hspace{0.01\textwidth}
  \subfigure{\includegraphics[width = 0.3\textwidth]{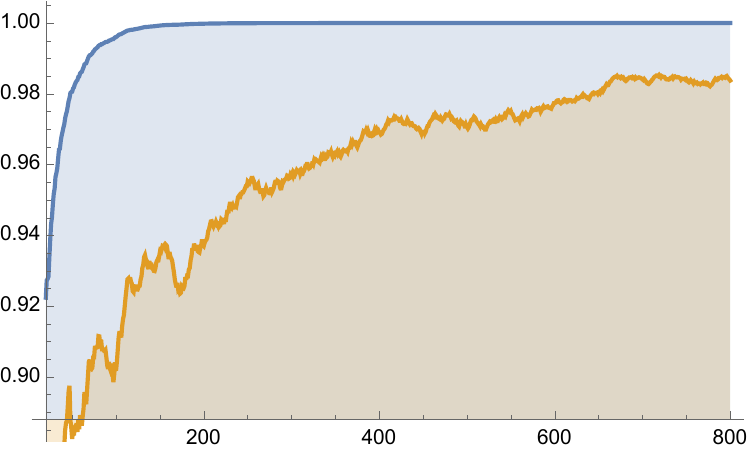}} 
  \hspace{0.01\textwidth}
  \subfigure{\includegraphics[width = 0.3\textwidth]{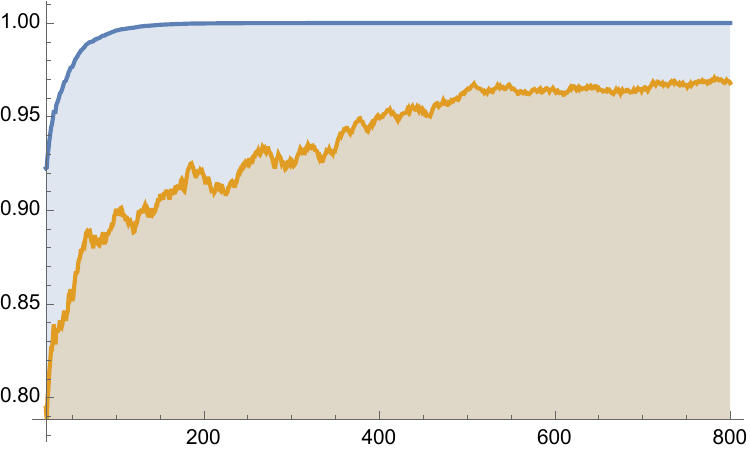}} 
  \caption{These plots illustrate the lower and upper bounds behaviour for three different vectors \( d = (d_i)_{i = 1, \dots , 800} \), randomly chosen from \( [0.95, 1.01]^{800}  \).}
\end{figure}


\subsection{KL divergence}%
\label{subsec:main_KL}

Before stating the next theorem, we introduce auxiliary notation that will be
used in its formulation. For \(k \in \N\) such that \(0<k<n/2\) define
\begin{equation}\label{alpha k n}
  \alpha_{\mp,n-2}
  = 2^{1-\frac{n}{2}}
  e^{\frac{\nu}{2 d_{\mp}}}
  \Gamma \left(\frac{n}{2}\right)
  \left(\frac{d_{\mp}}{\nu}\right)^{1-\frac{n}{2}}
  \Gamma \left(1-\frac{n}{2},\frac{\nu}{2 d_{\mp}}\right)
\end{equation}
and let for \( n \in \N \)
\begin{equation}\label{Delta mp n}
  \Delta_{\mp, n}
  = \begin{dcases}
    \sqrt{ \pi } \left( 
      \pi \erfi \left( \sqrt{ \frac{\nu}{2 d_{\mp}} } \right) 
      + \ln \left( \frac{d_{\mp}}{2 \nu} \right)
      - \gammaEM
    \right) 
    - \frac{\sqrt{ \pi } \nu }{d_\mp}
    {}_2 F_2 \left( 1, 1 ; \frac{3}{2}, 2; \frac{\nu}{2 d_{\mp}} \right) 
    & \text{if} \ n = 1,
    \\
    -e^{\frac{\nu}{2 d_{\mp}}}
    \text{Ei}\left(-\frac{\nu}{2d_{\mp}}\right)
    & \text{if} \ n = 2,
    \\
    \Gamma(n/2)\Delta_{\mp, 2}
    +\Gamma(n/2)\sum_{l=1}^{n/2-1}
    \frac{\alpha_{\mp,n-2l}}{\Gamma(n/2-l+1)}
    & \text{if} \ n \mid 2, n > 2,
    \\
    \Gamma(n/2)\Delta_{\mp, 2}
    +\Gamma(n/2)\sum_{l=1}^{n/2-1}
    \frac{\alpha_{\mp,n-2l}}{\Gamma(n/2-l+1)}
    & \text{if} \ n \nmid 2, n > 1.
  \end{dcases}
\end{equation}
Here \( \gammaEM \) is the Euler-Mascheroni constant.

\begin{theorem}\label{main_KL}
  Under assumptions given in this section,
  \begin{enumerate}[1.]
    \item If \(\nu>2\), then
    \begin{align*}
      &
        \DKL{\mathcal{E}(g_1,I)}{\mathcal{E}(g_2,D)}
      \\[7pt]
      & \hspace{1cm}
        =\frac{1}{2} \sum_{i=1}^{n} \ln d_i
        + \frac{\nu}{\nu-2}\sum_{i=1}^{n}\frac{1}{2d_{i}}
        -\frac{1}{2} (n+\nu)
        \left(
        \psi_{0}\left(\frac{n+\nu}{2}\right)
        - \psi_{0}\left(\frac{\nu}{2}\right)
        \right)
        +\ln\left(
        \frac{\Gamma ( ( \nu + n ) / 2 )}{\Gamma ( \nu / 2 ) \, \pi^{n / 2}\nu^{n/2}}
        \right).
    \end{align*}

    \item If \(\nu\leq 2\), then \( \DKL{ \mathcal{E}(g_1,I) }{
    \mathcal{E}(g_2,D) } = \infty\).

    \item For all \( \nu > 0 \), we have
    \begin{align*}
      &
      \max\left(
        0,
        \frac{\nu+n}{2\Gamma(n/2)} \Delta_{-, n}
        -\ln\left(
          \frac{\Gamma ( ( \nu + n ) / 2 )}{\Gamma ( \nu / 2 ) \, \pi^{n / 2}\nu^{n/2}}
        \right)
        - \frac{n}{2}
        - \frac{n}{2}\ln(2\pi)
        -\frac{1}{2}\sum_{i=1}^{n}\ln d_i
      \right)
      \\[7pt]
      & \hspace{1cm}
        \leq
        \DKL{ \mathcal{E}(g_2,D) }{ \mathcal{E}(g_1,I) }
      \\[7pt]
      & \hspace{1cm}
        \leq
        \frac{\nu+n}{2\Gamma(n/2)}\Delta_{+, n}
        - \ln\left(
        \frac{\Gamma ( ( \nu + n ) / 2 )}{\Gamma ( \nu / 2 ) \, \pi^{n / 2}\nu^{n/2}}
        \right)
        -\frac{n}{2}
        -\frac{n}{2}\ln(2\pi)
        -\frac{1}{2}\sum_{i=1}^{n}\ln d_i,
    \end{align*}
  \end{enumerate}
  where \(\Delta_{\mp,n}\) is given in~\eqref{Delta mp n}.
\end{theorem}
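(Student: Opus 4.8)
To prove \Cref{main_KL}, the plan is to work directly from the definition \( \DKL{\mu}{\nu} = \int_{\R^n}\ln(d\mu/d\nu)\,d\mu \) and the explicit densities~\eqref{def:pdfs}. For either ordering of the two laws, \( \ln(f_1/f_2) \) splits into three pieces: a constant (the ratio of the two normalising factors, which also carries the \( \tfrac12\sum_i\ln d_i \) term), a \emph{logarithmic} piece \( -\tfrac{\nu+n}{2}\ln\bigl(1+\tfrac1\nu\|x\|^2\bigr) \), and a \emph{quadratic} piece \( +\tfrac12\sum_i x_i^2/d_i \); reversing the ordering reverses every sign. So the whole statement reduces to evaluating, or bounding, the expectations of the logarithmic and quadratic pieces under the relevant measure.

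For parts (1) and (2) I would integrate \( \ln(f_1/f_2) \) against the \( t \)-law \( \vk X\sim\mathcal{E}(g_1,I) \). The quadratic piece contributes \( \tfrac12\sum_i d_i^{-1}\,\mathbb{E}[X_i^2] \); a marginal of \( \vk X \) has second moment \( \nu/(\nu-2) \) when \( \nu>2 \) and infinite second moment when \( \nu\le 2 \), which is precisely the dichotomy in parts (1)--(2): it produces the term \( \tfrac{\nu}{\nu-2}\sum_i\tfrac{1}{2d_i} \) in part (1), and is the only non-integrable contribution in part (2). For the logarithmic piece I would use the representation \( \vk X = \vk Z/\sqrt{W/\nu} \) with \( \vk Z\sim\mathcal{N}(0,I_n) \) and \( W\sim\chi^2_\nu \) independent: then \( 1+\tfrac1\nu\|\vk X\|^2=(W+\|\vk Z\|^2)/W \) is distributed as \( 1/B \) with \( B\sim\mathrm{Beta}(\nu/2,n/2) \), so \( \mathbb{E}\ln\bigl(1+\tfrac1\nu\|\vk X\|^2\bigr) = -\mathbb{E}\ln B = \psi_0\bigl(\tfrac{\nu+n}{2}\bigr)-\psi_0\bigl(\tfrac\nu2\bigr) \), finite for every \( \nu>0 \). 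Collecting the constant then yields the closed form of part (1). For part (2) this same computation shows the constant and logarithmic parts are integrable for all \( \nu>0 \), so the negative part of \( \ln(f_1/f_2)\,f_1 \) is integrable while the nonnegative quadratic contribution has infinite integral; hence \( \DKL=+\infty \).

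For part (3) I would integrate \( \ln(f_2/f_1) \) against the Gaussian law \( \vk X\sim\mathcal{N}(0,D) \). Now the quadratic piece contributes exactly \( -n/2 \) and the constant is explicit, so the whole problem is to control \( \tfrac{\nu+n}{2}\,\mathbb{E}_{\mathcal{N}(0,D)}\ln\bigl(1+\tfrac1\nu\|\vk X\|^2\bigr) \). Writing \( X_i=\sqrt{d_i}\,Z_i \) with \( \vk Z\sim\mathcal{N}(0,I_n) \) and using \( d_-\|\vk Z\|^2\le\sum_i d_iZ_i^2\le d_+\|\vk Z\|^2 \) with the monotonicity of \( \ln \), this expectation is squeezed between \( \mathbb{E}\ln\bigl(1+\tfrac{d_-}{\nu}\|\vk Z\|^2\bigr) \) and \( \mathbb{E}\ln\bigl(1+\tfrac{d_+}{\nu}\|\vk Z\|^2\bigr) \), where now \( \|\vk Z\|^2\sim\chi^2_n \). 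It then remains to prove \( \mathbb{E}\bigl[\ln\bigl(1+\tfrac{d_\mp}{\nu}S\bigr)\bigr]=\Delta_{\mp,n}/\Gamma(n/2) \) for \( S\sim\chi^2_n \), which I would get by evaluating \( \int_0^\infty\ln\bigl(1+\tfrac{d_\mp}{\nu}s\bigr)s^{n/2-1}e^{-s/2}\,ds \) directly: for \( n=1 \) and \( n=2 \) a square-root, respectively linear, substitution reduces it to a Gaussian \( \ln \)-integral (producing the \( \erfi \) and \( {}_2F_2 \) terms, with \( \gammaEM \) from the logarithmic singularity) and to \( \int_0^\infty\ln(1+at)e^{-t}\,dt=-e^{1/a}\Ei(-1/a) \); for \( n>2 \), integrating by parts in \( s \) lowers the index from \( n \) to \( n-2 \) at the cost of a correction term of the form \( \int_0^\infty s^{n/2-1}e^{-s/2}/(1+\tfrac{d_\mp}{\nu}s)\,ds \), which is the incomplete-Gamma quantity \( \alpha_{\mp,n-2} \) of~\eqref{alpha k n}, and unrolling the recursion down to the base case produces the Gamma-ratio weights and the finite sum in~\eqref{Delta mp n}. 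Finally, since a Kullback--Leibler divergence is nonnegative, the analytic lower bound may be replaced by its positive part, giving the \( \max(0,\cdot) \) in the statement, and the upper bound is automatically finite for all \( \nu>0 \) because the Gaussian has moments of every order.

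The routine steps are the bookkeeping of the normalising constants and the two elementary moment computations. The genuine obstacle is the closed-form evaluation of \( \mathbb{E}_{\chi^2_n}\bigl[\ln\bigl(1+\tfrac d\nu S\bigr)\bigr] \) and, above all, matching it term by term with the piecewise definition~\eqref{Delta mp n} of \( \Delta_{\mp,n} \): keeping track of the Gamma-ratio weights, of the incomplete-Gamma corrections \( \alpha_{\mp,n-2l} \), and of the two base cases (\( n=1 \) for odd \( n \), \( n=2 \) for even \( n \)) is where the delicate work lies.
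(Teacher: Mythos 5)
Your proposal is correct and follows essentially the same route as the paper: the same constant/logarithmic/quadratic decomposition of \( \ln(f_1/f_2) \) for parts (1)--(2) (where the paper simply asserts the value of \( \int g_1\ln g_1 \), you derive the digamma identity via the Beta representation of \( 1+\|\vk X\|^2/\nu \), which gives the same answer), and for part (3) the same sandwich \( d_-\|\vk Z\|^2\leq\sum_i d_iZ_i^2\leq d_+\|\vk Z\|^2 \) --- which is precisely how \Cref{kl_gen}(b), the tool the paper invokes, is itself proved --- followed by the identical integration-by-parts recursion reducing \( \widetilde{\Delta}_{\mp,n} \) to the base cases \( n=1,2 \). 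The only point worth flagging is that your (correct) bookkeeping in part (1) yields an extra \( \tfrac{n}{2}\ln(2\pi) \) term, which indeed appears in the paper's own intermediate computation but is absent from the displayed formula in the statement of \Cref{main_KL}.
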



\section{Distances between two Gamma distributions}%
\label{sec:gamma}

In this section we derive bounds on the total variation distance between two
multivariate Gamma distributions with independent components defined by their
pdfs
\begin{equation*}
  f_1 ( x )
  = \prod_{i = 1}^n
  \frac{
    \lambda_i^{\alpha_i} x_i^{\alpha_i-1} e^{-\lambda_i x_i}
  }{\Gamma ( \alpha_i )}
  \quad \text{and} \quad
  f_2 ( x )
  = \prod_{i = 1}^n
  \frac{
    \mu_i^{\beta_i} x_i^{\beta_i-1} e^{-\mu_i x_i}
  }{\Gamma ( \beta_i )},
\end{equation*}
\(\alpha_i, \beta_i, \lambda_i, \mu_i>0\) for \(i=1,\dots, n\). Before we
formulate our next result, recall the Berry Esseen inequality.

Let \(Z_i, {i=1,2,\dots }\) be a sequence of independent random variables such
that
\begin{equation}\label{student}
  \mathbb{E}\{Z_i\}=0, \ \ \mathbb{E}\{Z_i^2\}=\sigma_i^2
  \ \text{ and } \  \mathbb{E}\{|Z_i|^3\}=\rho_i<\infty, \ \ \ i=1,2,\dots.
\end{equation}
Then according to Berry-Esseen theorem there exists an absolute constant \(C_0\)
such that
\begin{equation}\label{berry-esseen}
  \sup_{x\in\mathbb{R}}|F_{n}(x)-\Phi(x)|\leq C_0\kappa_{n}
\end{equation}
where \(F_{n}\) and \(\Phi\) are distribution functions of
\(S_n=(Z_1+\dots+Z_n)/\left(\sqrt{\sigma_1^2+\dots+\sigma_n^2}\right)\) and
\(Z\sim N(0,1)\) respectively, and
\(\kappa_{n}=(\sum_{i=1}^{n}\rho_i)/(\sum_{i=1}^{n}\sigma_i^2)^{3/2}.\)

Denote distributions of \(\vk X\) and \(\vk Y\) with pdfs \(f_1\) and \(f_2\)
respectively by \(\mathbb{P}_{\vk X}\) and \(\mathbb{P}_{\vk Y}\). Moreover, in
this section we assume \(\psi_0\) and \(\psi_1\) to be polygamma functions of
order \(0\) and \(1\).

\begin{theorem}\label{main_gamma}
  \[
    \DTV{ \mathbb{P}_{\bm{X}} }{ \mathbb{P}_{\bm{Y}} }
    = 
    \mathbb{P} \left\{
      \frac{
        c - \sum_{i = 1}^n \mathbb{E} \{ Z_i \}
      }{
        \sqrt{\sum_{i = 1}^n \Var \{ Z_i \}}
      }
      \leq Z \leq
      \frac{c - \sum_{i = 1}^n \mathbb{E} \{ \widetilde{Z}_i \}}{
        \sqrt{\sum_{i = 1}^n \Var \{ \widetilde{Z}_i \}}
      }
    \right\}
    + \varepsilon,
  \]
  where
  \begin{align*}
    Z_i &= ( \alpha_i - \beta_i ) \ln X_i + ( \mu_i - \lambda_i ) X_i,
          \quad X_i \sim \operatorname{Gamma} ( \alpha_i, \lambda_i ),
    \\
    \widetilde{Z}_i & = ( \alpha_i - \beta_i ) \ln Y_i + ( \mu_i - \lambda_i ) Y_i,
                      \quad Y_i \sim \operatorname{Gamma} ( \beta_i, \mu_i ),
                      \ \ Z\sim N(0,1),
    \\ \mathbb{E} \{ Z_i \}
        & = ( \alpha_i - \beta_i )
          \Big( \psi_0 ( \alpha_i ) - \ln \lambda_i \Big)
          + \alpha_i \left( \frac{\mu_i}{\lambda_i} - 1 \right),
    \\[7pt]
    \Var \{ Z_i \}
        & =
          (\alpha_i -\beta_i )^2 \psi_1 (\alpha_i )
          -\frac{(\lambda_i -\mu_i ) (\alpha_i  (\lambda_i +\mu_i )
          -2 \beta_i  \lambda_i
          )}{\lambda_i^2},
    \\ \mathbb{E} \{ \widetilde{Z}_i \}
        & = ( \alpha_i - \beta_i )
          \left(
          \psi_0 ( \beta_i ) - \ln \mu_i
          \right)
          + \beta_i \left( 1 - \frac{\lambda_i}{\mu_i} \right),
    \\[7pt]
    \Var \{ \widetilde{Z}_i \}
        & = (\alpha_i -\beta_i )^2 \psi_1(\beta_i )
          + \frac{(\lambda_i -\mu_i ) (\beta_i  (\lambda_i +\mu_i )-2 \alpha_i  \mu_i )}{\mu_i ^2},
  \end{align*}
  \[
    |\varepsilon|\leq C_0(\kappa_{n}+\widetilde{\kappa}_{n}), \ \ \ \kappa_n=\frac{\sum_{i=1}^{n}\rho_i}{(\sum_{i=1}^{n} \Var \{ Z_i \})^{3/2}} \ \ \text{ and } \ \ \widetilde{\kappa}_n=\frac{\sum_{i=1}^{n}\widetilde{\rho}_i}{(\sum_{i=1}^{n} \Var \{ \widetilde{Z}_i \})^{3/2}}
  \]
  and
  \[
    \rho_i=\mathbb{E}\{|Z_i-\mathbb{E}\{Z_i\}|^3\} \ \ \text{ and } \ \ \widetilde{\rho}_i=\mathbb{E}\{|\widetilde{Z}_i-\mathbb{E}\{\widetilde{Z}_i\}|^3\}.
  \]
\end{theorem}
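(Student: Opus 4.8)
The plan is to combine the general density-based representation of the total variation distance with the Berry--Esseen theorem~\eqref{berry-esseen}, in the spirit of~\cite{BarabesiPratelli2024}. First I would record the identity
\[
  \DTV{ \mathbb{P}_{\bm X} }{ \mathbb{P}_{\bm Y} }
  = \Prob{ f_1(\bm X) \geq f_2(\bm X) } - \Prob{ f_1(\bm Y) \geq f_2(\bm Y) },
\]
which holds for any two densities $f_1, f_2$ — it follows from $\DTV{\mu}{\nu} = \int_{\R^n} ( f_1 - f_2 )_+ \, dx$ together with $\int_{\R^n} ( f_1 - f_2 ) \, dx = 0$ — and is the unconditional analogue of~\eqref{d-in-terms-of-X-and-Y}. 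Taking logarithms of the product densities, the likelihood-ratio event $\{ f_1(x) \geq f_2(x) \}$ becomes
\[
  \left\{ \sum_{i=1}^n \big[ ( \alpha_i - \beta_i ) \ln x_i + ( \mu_i - \lambda_i ) x_i \big] \geq c \right\},
  \qquad
  c = \sum_{i=1}^n \left[ \ln \frac{ \Gamma(\alpha_i) }{ \Gamma(\beta_i) } + \beta_i \ln \mu_i - \alpha_i \ln \lambda_i \right],
\]
with the \emph{same} additive constant $c$ in both cases (it comes only from the normalizing factors and is independent of which vector is substituted). Hence $\Prob{ f_1(\bm X) \geq f_2(\bm X) } = \Prob{ \sum_{i=1}^n Z_i \geq c }$ and $\Prob{ f_1(\bm Y) \geq f_2(\bm Y) } = \Prob{ \sum_{i=1}^n \widetilde Z_i \geq c }$, with $Z_i, \widetilde Z_i$ as in the statement; the $Z_i$ are independent, and so are the $\widetilde Z_i$, although neither family is identically distributed.

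Next I would compute the moments. Using $\mathbb{E}\{ X_i^s \} = \Gamma( \alpha_i + s ) / ( \lambda_i^s \Gamma(\alpha_i) )$ for $X_i \sim \operatorname{Gamma}( \alpha_i, \lambda_i )$ and differentiating in $s$ at $s = 0$ and $s = 1$ gives $\mathbb{E}\{ \ln X_i \} = \psi_0(\alpha_i) - \ln \lambda_i$, $\Var\{ \ln X_i \} = \psi_1(\alpha_i)$ and, via $\psi_0(\alpha_i + 1) = \psi_0(\alpha_i) + 1/\alpha_i$, the mixed moment $\mathbb{E}\{ X_i \ln X_i \}$, whence $\cov\{ X_i, \ln X_i \} = 1/\lambda_i$; combined with $\mathbb{E}\{ X_i \} = \alpha_i/\lambda_i$ and $\Var\{ X_i \} = \alpha_i/\lambda_i^2$, substitution into $Z_i = ( \alpha_i - \beta_i ) \ln X_i + ( \mu_i - \lambda_i ) X_i$ and routine algebra produce the displayed closed forms for $\mathbb{E}\{ Z_i \}$ and $\Var\{ Z_i \}$. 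The expressions for $\widetilde Z_i$ are obtained by the same computation with the law $\operatorname{Gamma}( \beta_i, \mu_i )$ of $Y_i$. Finiteness of $\rho_i$ and $\widetilde\rho_i$, required to invoke Berry--Esseen, follows from Minkowski's inequality since $\ln X_i$ and $X_i$ have finite moments of every order under a Gamma law.

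Finally I would apply~\eqref{berry-esseen} to each of the two normalized sums. After centering and scaling, $\Prob{ \sum_i Z_i \geq c } = 1 - F_n\big( ( c - \sum_i \mathbb{E}\{ Z_i \} ) / \sqrt{ \sum_i \Var\{ Z_i \} } \big)$, which differs from $1 - \Phi\big( ( c - \sum_i \mathbb{E}\{ Z_i \} ) / \sqrt{ \sum_i \Var\{ Z_i \} } \big)$ by at most $C_0 \kappa_n$ in absolute value, and likewise for $\widetilde Z_i$ with error at most $C_0 \widetilde\kappa_n$. Subtracting the two expressions, the two $1$'s cancel, and rewriting the remaining difference of two values of $\Phi$ as $\Prob{ a \leq Z \leq b }$ with $a, b$ the two arguments yields the claimed identity, with $\varepsilon$ equal to the difference of the two Berry--Esseen remainders, so that $| \varepsilon | \leq C_0 ( \kappa_n + \widetilde\kappa_n )$.

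I do not expect a real obstacle: the heavy lifting is done by Berry--Esseen, and what remains is bookkeeping — checking that $c$ is common to both events, and simplifying the moments into the stated forms. Two caveats should nonetheless be made explicit: the identity tacitly assumes $\sum_i \Var\{ Z_i \} > 0$ and $\sum_i \Var\{ \widetilde Z_i \} > 0$ (otherwise $\bm X$ and $\bm Y$ have identical componentwise laws and both sides are $0$), and~\eqref{berry-esseen} is used in its independent, not-identically-distributed version, which is precisely the form recalled just before the theorem.
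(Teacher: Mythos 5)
Your proposal is correct and follows essentially the same route as the paper: rewrite the likelihood-ratio event as $\{\sum_i Z_i \geq c\}$ (respectively $\{\sum_i \widetilde{Z}_i \geq c\}$) with the same constant $c$, and apply the Berry--Esseen bound~\eqref{berry-esseen} to each normalized sum. You supply some details the paper leaves implicit (the moment computations and the finiteness of the third absolute moments), but the argument is the same.
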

\begin{proof}
  As in \Cref{sec:mult-t-distr}, we shall begin by describing the set
  $\{ x \colon f_1(x) \geq f_2(x) \}$:
  \begin{align}
    \{ x \colon f_1 ( x ) \geq f_2 ( x ) \}
    & =
      \left\{
      x \colon
      \prod_{i = 1}^n
      x_i^{\alpha_i - \beta_i} e^{(\mu_i - \lambda_i) x_i}
      \geq
      \prod_{i = 1}^n
      \frac{
      \Gamma ( \alpha_i )
      \mu_i^{\beta_i}
      }{
      \Gamma ( \beta_i )
      \lambda_i^{\alpha_i}
      }
      \right\}
    \\[7pt]
    & =
      \left\{
      x \colon
      \sum_{i = 1}^n
      \Big(
      ( \alpha_i - \beta_i ) \ln x_i
      + ( \mu_i - \lambda_i ) x_i
      \Big)
      \geq c
      \right\},
  \end{align}
  where
  \begin{equation*}
    c =
    \sum_{i = 1}^n
    \ln \left(
      \frac{
        \Gamma ( \alpha_i ) \,
        \mu_i^{\beta_i}
      }{
        \Gamma ( \beta_i ) \,
        \lambda_i^{\alpha_i}
      }
    \right).
  \end{equation*}
  Hence
  \begin{equation*}
    \DTV{ \mathbb{P}_{\bm{X}} }{ \mathbb{P}_{\bm{Y}} }
    = \mathbb{P} \left\{
      \sum_{i = 1}^n Z_i
      \geq c
    \right\}
    - \mathbb{P} \left\{
      \sum_{i = 1}^n \widetilde{Z}_i
      \geq c
    \right\}
  \end{equation*}
  and therefore
  \begin{equation*}
    \DTV{ \mathbb{P}_{\bm{X}} }{ \mathbb{P}_{\bm{Y}} }
    = 
    \mathbb{P} \left\{
      \frac{
        c - \sum_{i = 1}^n \mathbb{E} \{ Z_i \}
      }{
        \sqrt{\sum_{i = 1}^n \Var \{ Z_i \}}
      }
      \leq Z \leq
      \frac{c - \sum_{i = 1}^n \mathbb{E} \{ \widetilde{Z}_i \}}{
        \sqrt{\sum_{i = 1}^n \Var \{ \widetilde{Z}_i \}}
      }
    \right\}
    + \varepsilon,
  \end{equation*}
  with \(Z\sim N(0,1)\) and
  \(|\varepsilon|\leq C_0(\kappa_{n}+\widetilde{\kappa}_{n})\).

\end{proof}

\section{Appendix}%
\label{sec:appendix}
\subsection{Proof of Lemma~\ref{covariance_reduction}.}
Let \(\bm{X} \sim \mathcal{E} ( g_1, \Sigma_1 )\) and
\(\bm{Y} \sim \mathcal{E} ( g_2, \Sigma_2 )\). Since the matrix
\( \Sigma_1^{-1/2} \Sigma_2 \Sigma_1^{-1/2} \) is symmetric, there exists an
orthogonal matrix \( P \) and a diagonal matrix \( D \) such that
  \begin{equation*}
   \Sigma_1^{-1/2} \Sigma_2 \Sigma_1^{-1/2} = P^\top D P.
  \end{equation*}

  Changing the variables by \(\vk y = P \Sigma_1^{-1/2} \vk x\) we obtain
  \begin{align*}
    \Df [\mathcal{E} (g_1, \Sigma_1)][\mathcal{E} (g_2, \Sigma_2)]
    & = \int_{\R^n}
    \frac{1}{\sqrt{\det \Sigma_2}} \, 
    g_2 \left( \vk x^\top \Sigma_2^{-1} \vk x \right)
    f \left(
    \frac{
    g_1 \left( \vk x^\top \Sigma_{1}^{-1} \vk x \right)\sqrt{\det{\Sigma_2}}
    }{
    g_2 \left(\vk x^\top \Sigma_{2}^{-1}\vk x \right)\sqrt{\det{\Sigma_1}}
    }\right)
    \, d\vk x
   \\[7pt]
   & = \sqrt{\det\Sigma_1} \int_{\R^n}
   \frac{1}{\sqrt{\det \Sigma_2}} \, 
   g_2 \left( \vk y^\top D^{-1} \vk y \right)
   f \left(
    \frac{
    g_1 \left( \vk y^\top \vk y\right)\sqrt{\det D}
   }{
    g_2 \left(\vk y^\top D^{-1}\vk y \right)
    }\right)
    \, d\vk y
    \\[7pt]
    & = \int_{\R^n} 
    \frac{1}{\sqrt{ \det D }} \,
    g_2 \left( \vk y^\top D^{-1} \vk y \right)
    f \left(
    \frac{
    g_1 \left( \vk y^\top \vk y\right)\sqrt{\det D}
    }{
    g_2 \left(\vk y^\top D^{-1}\vk y \right)
    }\right)
    \, d\vk y
    \\ & 
    = \Df [\mathcal{E} (g_1, I)][\mathcal{E} (g_2, D)].
  \end{align*}
  To conclude the proof, it remains to notice that the eigenvalues and their
  multiplicities of \( \Sigma_1^{-1/2} \Sigma_2 \Sigma_1^{-1/2} \) and those of
  \( \Sigma_2 \Sigma_1^{-1} \) are the same.

\subsection{Proof of Proposition~\ref{tv_gen}}
    \begin{enumerate}[(a)]
      \item Follows from~\cite[Theorem 2.9]{FangKotz1990}.

      \item Observe that
      \begin{align*}\label{f_1>f_2}
        &
          \DTV{ \mathcal{E}(g_{1}, I) }{ \mathcal{E}(g_{2}, D) }
          = \Prob{ f_1(\vk X) \geq f_2(\vk X) }
          - \Prob{ f_1(\vk Y)\geq f_2(\vk Y) }
          \\[7pt]
        & \hspace{2cm} =
          \Prob{
          \left(\prod_{i=1}^{n}d_i\right)^{1/2}g_1(|\vk X|^2)
          \geq g_2\left(\sum_{i=1}^{n}\frac{X_{i}^2}{d_i}\right)
          }
          - \Prob{
          \left(\prod_{i=1}^{n}d_i\right)^{1/2}g_1(|\vk Y|^2)
          \geq g_2\left(\sum_{i=1}^{n}\frac{Y_{i}^2}{d_i}\right)
          }.
      \end{align*}
      Using that \( g_{1} \) and \( g_{2} \) are decreasing, we obtain
      \begin{equation*}
        \Prob{ |\bm{X}|^{2} \in A_{-} }
        \leq
        \Prob{
          \left(\prod_{i=1}^{n}d_i\right)^{1/2}g_1(|\vk X|^2)
          \geq g_2\left(\sum_{i=1}^{n}\frac{X_{i}^2}{d_i}\right)
        }
        \leq \mathbb{P} \{ |\bm{X}|^{2} \in A_{+} \}.
      \end{equation*}
      Similarly,
      \begin{equation*}
        \Prob{ d_{+} \sum_{i = 1}^n \frac{Y_{i}^{2}}{d_{i}} \in A_{-} }
        \leq
        \Prob{
          \left(\prod_{i=1}^{n}d_i\right)^{1/2}g_1(|\vk Y|^2)
          \geq g_2\left(\sum_{i=1}^{n}\frac{Y_{i}^2}{d_i}\right)
        }
        \leq
        \Prob{ d_{-} \sum_{i = 1}^n \frac{Y_{i}^{2}}{d_{i}} \in A_{+} }.
      \end{equation*}
      Combining these bounds, we obtain
      \begin{align*}
        &
          \Prob{ |\bm{X}|^{2} \in A_{-} }
          - \Prob{ d_{-} \sum_{i = 1}^n \frac{Y_{i}^{2}}{d_{i}} \in A_{+} }
        \\[7pt]
        & \hspace{2cm}
          \leq
          \DTV{ \mathcal{E} (g_{1}, I) }{ \mathcal{E} (g_{2}, D) }
          \leq
          \Prob{ |\bm{X}|^{2} \in A_{+} }
          - \Prob{ d_{+} \sum_{i = 1}^n \frac{Y_{i}^{2}}{d_{i}} \in A_{-} }.
      \end{align*}
      It remains to note that
      \begin{equation*}
        \Prob{ |\bm{X}|^{2} \in A_{\pm} }
        = \int_{\R^{n}}
        \Ind{ |\bm{x}|^{2} \in A_{\pm} } \, g_{1} ( \bm{x} )
        \, d\bm{x}
        = \frac{2 \pi^{n / 2}}{\Gamma ( n / 2 )}
        \int_0^{\infty}
        \Ind{ r^{2} \in A_{\pm} } \, g_{1} ( r^{2} )
        \, r^{n-1} \, dr
      \end{equation*}
      and
      \begin{align*}
        \Prob{ d_{\mp} \sum_{i = 1}^n \frac{Y_{i}^{2}}{d_{i}} \in A_{\pm} }
        & = \int_{\R^{n}}
        \Ind{ d_{\mp} \sum_{i = 1}^n \frac{y_{i}^{2}}{d_{i}} \in A_{\pm} }
        \, \prod_{i = 1}^n d_{i}^{-1/2}
          \, g_{2} \left( \sum_{i = 1}^n \frac{y_{i}^{2}}{d_{i}} \right)
          \, d\bm{y}
        \\[7pt]
        & = \int_{\R^{n}}
          \Ind{ d_{\mp} | \bm{y} |^{2} \in A_{\pm} } \, g_{2} ( | \bm{y} |^{2} )
          \, d\bm{y}
        \\[7pt]
        & = \frac{2 \pi^{n / 2}}{\Gamma ( n / 2 )}
          \int_0^{\infty}
          \Ind{ d_{\mp} r^{2} \in A_{\pm} } \, g_{2} ( r^{2} ) \, dr.
      \end{align*}

    \end{enumerate}
    
    \subsection{Proof of Proposition~\ref{kl_gen}}
    \begin{enumerate}[(a)]
      \item Follows from~\cite[Theorem 2.9]{FangKotz1990}.

      \item By definition, if \(g_2\) is a decreasing function, then we have that
      \begin{align*}
        \DKL{ \mathcal{E}(g_1,I) }{ \mathcal{E}(g_2,D) }
        &
          = \int_{\mathbb{R}^n} g_1(|\vk x|^2)
          \ln\left(
          \frac{
          g_1(|\vk x|^2)\prod_{i=1}^{n}d_i^{1/2}
          }{
          g_2\left(\sum_{i=1}^{n}\frac{x_i^2}{d_i}\right)
          }
          \right)
          d\vk x
        \\
        &
          = \frac{1}{2} \sum_{i=1}^{n}\ln d_i
          + \int_{\mathbb{R}^n}
          g_1(|\vk x|^2)
          \ln\left(
          \frac{g_1(|\vk x|^2)}{g_2\left(\sum_{i=1}^{n}\frac{x_i^2}{d_i}\right)}
          \right)
          d\vk x
        \\
        & \leq
          \frac{1}{2} \sum_{i=1}^{n}\ln d_i
          + \int_{\mathbb{R}^n}
          g_1(|\vk x|^2)
          \ln\left(
          \frac{g_1(|\vk x|^2)}{g_2\left(|\vk x|^2/d_-\right)}
          \right)
          d\vk x
        \\
        &
          = \frac{1}{2}\sum_{i=1}^{n} \ln d_i+\frac{2\pi^{n/2}}{\Gamma(n/2)}
          \int_{0}^{\infty}
          g_1(r^2)
          \ln\left(\frac{g_1(r^2)}{g_2\left(r^2/d_-\right)}\right)
          \, r^{n-1} \, dr.
      \end{align*}
      The lower bound for
      \( \DKL{ \mathcal{E}(g_1,I) }{ \mathcal{E}(g_2,D) }\) may be proved in
      the same way.
      \smallskip

      \subsection{Proof of Theorem~\ref{main_TVD}}
      To start with, we check auxiliary lemmas mentioned in
      \Cref{sec:main_TV}.
      \begin{proof}[Proof of \Cref{sublevel-lemma}]
        If \( \alpha > \gamma \), then the maximum of
        \( \varphi_{\alpha, \gamma} ( x ) \), \( x \geq 0 \) equals
        \begin{equation*}
          \varphi_{\alpha, \gamma} ( x_0 )
          = e^{\gamma} \left( \frac{\alpha}{\gamma e} \right)^{\alpha} > 1
          \quad \text{and is attained at} \quad
          x_0 = \frac{\alpha - \gamma}{\gamma}.
        \end{equation*}
        Otherwise, if \( \gamma \geq \alpha \), the maximum of
        \( \varphi_{\alpha, \gamma} ( x ) \), \( x \geq 0 \) is
        \( \varphi_{\alpha, \gamma} ( 0 ) = 1 \) and the function
        \( \varphi_{\alpha, \gamma} \) is decreasing. Therefore, if
        \( \alpha < \gamma \) and \( c \geq 1 \) or \( \gamma \leq \alpha \) and
        \( c \geq e^{\gamma} ( \alpha / \gamma e )^{\alpha} \), we have that
        \begin{equation*}
          D_{\alpha, \gamma} ( c ) = \{ x \colon \varphi_{\alpha, \gamma} ( x ) \leq c \} = [0, \infty).
        \end{equation*}
        In order to find the roots of \( \varphi_{\alpha, \gamma} ( x ) = c \)
        in the remaining cases
        \begin{itemize}
          \item[(a)] \( 0 < c < 1 \),
          \item[(b)] \( \alpha \geq \gamma \) and
          \( 1 < c < e^{\gamma} ( \alpha / \gamma e )^{\alpha} \),
        \end{itemize}
        we shall use the Lambert functions \( W_{-1} ( x ) \) and
        \( W_0 ( x ) \) defined on \( [-1/e, 0] \) as the two solutions
        \( y_{-1} \) and \( y_0 \) of \( y e^y = x \) satisfying
        \( y_{-1} \leq y_0 \), see~\cite{CorlessEtAl1996}. Let us first rewrite
        the equation in the form \( y e^y = z \) with some \( y \) and \( z \):
        \begin{align}
          ( 1 + x )^{\alpha} e^{-\gamma x}
          = c
          & \iff
            ( 1 + x )^{\alpha} e^{-\gamma ( 1 + x )} = c e^{-\gamma}
            \notag
          \\[7pt]
          & \iff
            ( 1 + x ) e^{-( \gamma / \alpha ) ( 1 + x )} = c^{1/\alpha} e^{-\gamma / \alpha}
            \notag
          \\[7pt]
          & \iff
            -\frac{\gamma}{\alpha} \, ( 1 + x ) \, e^{-( \gamma / \alpha ) ( 1 + x ) }
            = -\frac{ \gamma c^{1/\alpha} e^{-\gamma / \alpha} }{\alpha}.
            \label{equation1}
        \end{align}
        The last equation is of the form \( y e^y = z \) with
        \( y = -( \gamma / \alpha ) ( 1 + x ) \) and
        \( z = -\gamma c^{1/\alpha} e^{-\gamma / \alpha} / \alpha \). In order
        to solve it, we need to check whether \( z \in [-1/e, 0] \), which is
        the domain of \( W_{-1} \) and \( W_0 \):
        \begin{equation}
          \label{condition1}
          -\frac{ \gamma c^{1/\alpha} e^{-\gamma / \alpha} }{\alpha} \geq -\frac{1}{e}
          \iff
          c^{1/\alpha} \leq \frac{\alpha}{\gamma} \, e^{\gamma / \alpha - 1}
          \iff
          c \leq e^{\gamma} \left( \frac{\alpha}{\gamma e} \right)^{\alpha}.
        \end{equation}
        We have:
        \begin{itemize}
          \item[(a)] If \( 0 < c < 1 \), then \( c^{1/\alpha} < 1 \) and
                \( e^{x-1} / x \geq 1 \) for all \( x \geq 0 \),
                so~\eqref{condition1} is satisfied.
          \item[(b)] If \( \alpha \geq \gamma \) and
                \( 1 < c < e^{\gamma} ( \alpha / \gamma e )^{\alpha} \), then
                the condition~\eqref{condition1} is also satisfied.
        \end{itemize}
        Applying \( W_0 \) and \( W_{-1} \) to both sides of~\eqref{equation1},
        we obtain two solutions which we denote by
        \( a_{\alpha, \gamma} ( c ) \) and \( b_{\alpha, \gamma} ( c ) \):
        \begin{equation*}
          -\frac{\gamma}{\alpha} \, ( 1 + a_{\alpha, \gamma} ( c ) )
          = W_0 \left( -\frac{\gamma c^{1/\alpha} e^{-\gamma / \alpha} }{\alpha} \right)
          \quad \text{and} \quad
          -\frac{\gamma}{\alpha} \, ( 1 + b_{\alpha, \gamma} ( c ) )
          = W_{-1} \left( -\frac{\gamma c^{1/\alpha} e^{-\gamma / \alpha} }{\alpha} \right).
        \end{equation*}
        Solving them for \( a_{\alpha, \gamma} ( c ) \) and \( b_{\alpha, \gamma} ( c ) \), we obtain
        \begin{equation*}
          a_{\alpha, \gamma} ( c )
          = -1 -\frac{\alpha}{\gamma} \,
          W_0 \left( -\frac{\gamma c^{1/\alpha} e^{-\gamma / \alpha} }{\alpha} \right)
          \quad \text{and} \quad
          b_{\alpha, \gamma} ( c )
          = -1 -\frac{\alpha}{\gamma} \,
          W_{-1} \left( -\frac{\gamma c^{1/\alpha} e^{-\gamma / \alpha} }{\alpha} \right).
        \end{equation*}
        Since \( W_{-1} ( x ) \leq W_0 ( x ) \), we see that
        \( a_{\alpha, \gamma} ( c ) \leq b_{\alpha, \gamma} ( c ) \). If
        \( \gamma > \alpha \), using that \( W_0 ( z ) \geq -1 \)
        (see~\cite[p.~331]{CorlessEtAl1996}) we obtain
        \begin{equation*}
          a_{\alpha, \gamma} ( c ) \leq -1 + \frac{\alpha}{\gamma} < 0,
        \end{equation*}
        hence \( b_{\alpha, \gamma} ( c ) \) is the only root in \( x \geq 0 \).
        If, on the other hand, \( \gamma \leq \alpha \) and \( 0 < c < 1 \),
        then by \( W_0 ( z ) \geq \sqrt{1 + ez} - 1 \) for \( x \in [-1/e, 0] \)
        (see~\cite[Theorem 3.2]{Roigsolvas2022}), it follows that
        \begin{align*}
          a_{\alpha, \gamma} ( c )
          & \leq -1 - \frac{\alpha}{\gamma}
            \left( \sqrt{1 - \frac{\gamma}{\alpha} \, c^{1/\alpha} \, e^{1-\gamma / \alpha} } - 1 \right)
          \\[7pt]
          & < -1 - \frac{\alpha}{\gamma}
            \left( \sqrt{1 - \frac{\gamma}{\alpha} \, e^{1-\gamma/\alpha} } - 1 \right)
            \quad \text{by} \quad 0 < c < 1
          \\[7pt]
          & \leq -1 - \frac{\alpha}{\gamma}
            \left( 1 - \frac{\gamma}{\alpha} - 1 \right)
            \quad \text{by} \quad \sqrt{ 1 - x e^{1-x} } \leq 1 - x, \ x \in [0, 1]
          \\[7pt]
          & = 0.
        \end{align*}
        Hence, in this case \( b_{\alpha, \gamma} ( c ) \) is again the only
        root in \( x \geq 0 \).

        Finally, if \(\alpha\geq\gamma\) and
        \( 1 < c < e^{\gamma} ( \alpha / \gamma e )^{\alpha} \), then since
        \(W_0(x)\geq -1\), we have that
        \begin{align*}
          a_{\alpha, \gamma} ( c )
          \geq -1 + \frac{\alpha}{\gamma}\geq 0.
        \end{align*}
      \end{proof}
      From the presented proof \Cref{Lambert} automatically follows.

      \begin{proof}[Proof of \Cref{ndown}]
        The function \( n \mapsto \clow(n) \) increases on \( \N \) if for all
        \( k \in \N \) holds
        \begin{equation}
          \frac{\clow(2k+1)}{\clow(2k)} > 1
          \quad \text{and} \quad
          \frac{\clow(2k)}{\clow(2k-1)} > 1.
        \end{equation}
        Since
        \begin{equation*}
          \frac{
            \Gamma \left( \frac{\nu +1}{2}  + k \right)
          }{
            \Gamma \left( \frac{\nu}{2} + k \right)
          }
          = \frac{
            \Gamma \left( \frac{\nu +1}{2} \right)
          }{
            \Gamma \left( \frac{\nu}{2} \right)
          }
          \prod_{l = 0}^{k-1}
          \frac{( \nu +1) / 2 + l}{\nu/ 2 + l}
          = \frac{
            \Gamma \left( \frac{\nu + 1}{2} \right)
          }{
            \Gamma \left( \frac{\nu}{2} \right)
          }
          \prod_{l = 0}^{k-1} \left( 1 + \frac{1}{\nu+ 2l} \right),
        \end{equation*}
        we have
        \begin{align*}
          \frac{
          \clow(2k+1)}{\clow(2k)}
          =
          \underbrace{
          \frac{
          \Gamma \left( \frac{\nu + 1}{2} \right)
          }{
          \Gamma \left( \frac{\nu}{2} \right)
          }
          \left(\frac{2}{\nu}\right)^{1/2}d_{-}^{1/2}
          }_{= \, \clow (1) \, \geq \, 1}
          \prod_{l = 0}^{k-1} \left( 1 + \frac{1}{\nu + 2l} \right)
          \geq \prod_{l = 0}^{k-1} \left( 1 + \frac{1}{\nu + 2l} \right)
          > 1.
        \end{align*}
        Moreover
        \begin{equation*}
          \frac{
            \Gamma \left( \frac{\nu}{2} + k  \right)
          }{
            \Gamma \left( \frac{\nu + 1}{2} + k-1 \right)
          }
          = \frac{
            \Gamma \left( \frac{\nu}{2} + 1 \right)
          }{
            \Gamma \left( \frac{\nu + 1}{2} \right)
          }
          \prod_{l = 0}^{k-2}
          \frac{\nu / 2 + 1 + l}{(\nu + 1) / 2 + l}
          = \frac{
            \Gamma \left( \frac{\nu}{2} + 1 \right)
          }{
            \Gamma \left( \frac{\nu + 1}{2} \right)
          }
          \prod_{l = 0}^{k-2}
          \left( 1 + \frac{1}{\nu + 2l + 1} \right).
        \end{equation*}
        Using the logarithmic convexity of the Gamma-function, we have that
        \(\Gamma(x) \, \Gamma(x+1)\geq\Gamma(x+1/2)^2\) for all \( x \geq 0 \),
        and therefore
        \begin{align*}
          \frac{\clow(2k)}{\clow(2k-1)}
          &
            = \frac{
            \Gamma \left( \frac{\nu }{2}+1 \right)
            }{
            \Gamma \left( \frac{\nu+1}{2} \right)
            }
            \left(\frac{2}{\nu}\right)^{1/2}d_{-}^{1/2}
            \prod_{l = 0}^{k-2} \left( 1 + \frac{1}{\nu + 2l + 1} \right)
          \\[7pt]
          &
            \geq
            \underbrace{
            \frac{
            \Gamma \left( \frac{\nu +1}{2}\right)
            }{
            \Gamma \left( \frac{\nu}{2} \right)
            }
            \left(\frac{2}{\nu}\right)^{1/2}d_{-}^{1/2}
            }_{= \, \clow ( 1 ) \, \geq \, 1}
            \prod_{l = 0}^{k-2} \left( 1 + \frac{1}{\nu + 2l + 1} \right)
            \geq \prod_{l = 0}^{k-2} \left( 1 + \frac{1}{\nu + 2l+1} \right)
            > 1.
        \end{align*}
      \end{proof}
      \begin{proof}[Proof of \Cref{minimum}]
        The derivative \( \clow' ( x ) \) is given by
        \[
          \clow'(x)
          =f(x)
          \left(
            \ln\left(\frac{2d_{-}}{\nu}\right)
            +\psi_0\left(\frac{\nu+x}{2}\right)
          \right),
        \]
        where
        \[
          f(x)
          =\frac{
            2^{-1+x/2}d_{-}^{x/2}\Gamma((\nu+x)/2)
          }{
            \nu^{x/2}\Gamma(\nu/2)
          }>0
          \quad \text{for all} \quad x > 0
        \]
        and \(\psi_0(x)\) is a digamma function. Since \(\psi_0\) is strictly
        increasing, \( \clow \) has at most one minimum point \(x_{\min}\). Let
        us check that if \(d_{-}<1\), then \( \clow \) has minimum
        \(x_{\min}>0\). Indeed, using that
        \[
          \psi_0(x)<\ln x \ \text{ for } \ x>0,
        \]
        we obtain
        \[
          c_{\min}'(0)
          =f(0) \left(
            \ln\left(\frac{2d_{-}}{\nu}\right)
            +\psi_0\left(\frac{\nu}{2}\right)
          \right)<f(0)\ln d_{-}<0.
        \]
        Hence, the minimum \(x_{\min}\) exists, and \( \clow \) increases if
        \(x>x_{\min}\) and decreases otherwise for \(x>0\). Moreover,
        \[
          0 = \clow'(x_{\min})
          <f(x_{\min}) \left(
            \ln\left(\frac{2d_{-}}{\nu}\right)
            +\ln\left(\frac{\nu+x_{\min}}{2}\right)
          \right)
          =f(x_{\min}) \ln\left(
            \frac{2d_{-}(\nu+x_{\min})}{2\nu}
          \right),
        \]
        hence \( x_{\min} > \nu / d_{-} - \nu \). Here we used again that
        \(\psi_0(x)<\ln x\) for \(x>0\).
      \end{proof}

      \begin{proof}[Proof of \Cref{n1}]
        Observe first that
        \(c/(e^{\gamma_{-} } ( \alpha / \gamma_{-} e )^{\alpha})\) decreases as
        on \(n\in\mathbb{N}\). To this end, notice that
        \[
          e^{\gamma_{-} } ( \alpha / \gamma_{-}  e )^{\alpha}
          =C_1e^{-n/2} ( \nu + n )^{( \nu + n ) / 2} \nu^{-n/2}
          d_{+}^{n / 2},
        \]
        where \( C_1 = e^{\nu/2d_{+} - \nu/2} d_{+}^{\nu/2} \nu^{-\nu/2}\).
        Therefore, we obtain that
        \begin{align*}
          \frac{c}{e^{\gamma_{-} } ( \alpha / \gamma_{-}  e )^{\alpha}}
          & = \frac{1}{C_1}
            \frac{\Gamma ( ( \nu + n ) / 2 )(2e)^{n/2}}{ \Gamma ( \nu / 2 )(\nu+n)^{(\nu+n)/2} }
            \prod_{i=1}^{n}\left(\frac{d_{i}}{d_{+}}\right)^{1/2}
            =C_2f(n)g(n),
        \end{align*}
        where \(C_2=1/(C_1\Gamma(\nu/2)) \) is an independent of \(n\) positive
        constant,
        \[
          f(n)
          =\frac{\Gamma ( ( \nu + n ) / 2) (2e)^{n/2}}{(\nu+n)^{(\nu+n)/2}}
          \quad \text{and} \quad
          g(n)
          =\prod_{i=1}^{n}\left(\frac{d_{i}}{d_{+}}\right)^{1/2}.
        \]
        Since \(f,g\) are positive functions and \(g\) is decreasing, it remains
        to show that \(f\) decreases on \(\N\). To this end, we calculate its
        derivative. It turns out that
        \[
          f'(n)
          = f_0(n) \left(
            \ln(2)-\ln(\nu+n)
            +\psi_0\left(\frac{\nu+n}{2}\right)
          \right)
          = f_0(n) \left(
            -\ln\left(\frac{\nu+n}{2}\right)
            +\psi_0\left(\frac{\nu+n}{2}\right)
          \right)
          < 0,
        \]
        where
        \[
          f_0(n)=\frac{(2e)^{n/2}\Gamma(( \nu+n)/2)}{2(\nu+n)^{(\nu+n)/2}}>0.
        \]
        Hence, \(f\) is a decreasing function and so does
        \(c/(e^{\gamma_{-} } ( \alpha / \gamma_{-} e )^{\alpha})\). To finish
        the prove it remains to check the inequality for \(n=1.\) Since
        \(d_1\leq d_{+}\), we have that
        \[
          \frac{c}{e^{\gamma_{-} } ( \alpha / \gamma_{-}  e )^{\alpha}
          }
          =\frac{1}{C_1}\frac{
            \Gamma ( ( \nu + 1 ) / 2 )(2e)^{1/2}
          }{ \Gamma ( \nu / 2 )(\nu+1)^{(\nu+1)/2} }\left(\frac{d_{1}}{d_{+}}\right)^{1/2}
          \leq \frac{1}{C_1}\frac{
            \Gamma ( ( \nu + 1) / 2 )(2e)^{1/2}
          }{ \Gamma ( \nu / 2 )(\nu+1)^{(\nu+1)/2} },
        \]
        where \( C_1 = e^{\nu/2d_{+} - \nu/2} d_{+}^{\nu/2} \nu^{-\nu/2}.\)
        Since \(xe^{1/x}\geq e\) for all \(x>0\), we have that
        \(C_1\geq \nu^{-\nu/2}\). Therefore, using that
        \(\Gamma(x+1/2)\leq x^{1/2}\Gamma(x)\) for all \(x>0\), we obtain that
        \begin{align*}
          \frac{c}{e^{\gamma_{-} } ( \alpha / \gamma_{-}  e )^{\alpha}
          }
          &\leq\frac{
            \nu^{\nu/2}\Gamma ( ( \nu + 1 ) / 2 )(2e)^{1/2}
            }{ \Gamma ( \nu / 2 )(\nu+1)^{(\nu+1)/2} }
            \leq\frac{
            e^{1/2}
            }{ (1+1/\nu)^{(\nu+1)/2} }
            =\left(\frac{e}{(1+1/\nu)^{\nu+1}}\right)^{1/2}<1,
        \end{align*}
        where we used that \((1+1/x)^{x+1}>e\) for all \(x>0\).
      \end{proof}

      \begin{proof}[Proof of Theorem~\ref{main_TVD}]
        If~\eqref{limsup-condition} is satisfied,
        \( D_{\alpha, \gamma_+} ( c ) = [0, \infty) \) and therefore
        \( A_+ = [0,\infty)\). This makes the lower bound from
        Theorem~\ref{tv_gen} trivial, since
        \[
          \frac{1}{\Gamma(n/2)}\int_{0}^{\infty}y^{n/2-1}e^{-y}dy=1.
        \]
        Nevertheless, the upper bound remains interesting:
        \begin{align*}
          \DTV{ \mathcal{E}(g_1,I) }{ \mathcal{E}(g_2,D) }
          &\leq \frac{2\pi^{n/2}}{\Gamma(n/2)}
            \int_{0}^{\infty}
            g_1\left(r^2\right)
            \, r^{n-1} \, dr
            -\frac{1}{\Gamma(n/2)}
            \int_{0}^{\infty}
            \Ind{ 2d_{+}y\in A_{-} }
            y^{n/2-1}e^{-y}
            \, dy
          \\
          &
            = \frac{1}{\Gamma(n/2)}
            \int_{0}^{\infty}
            \Ind{ 2d_{+}y\in A^c_{-} }
            y^{n/2-1}e^{-y}
            \, dy,
        \end{align*}
        where \(A_{-}^c=[0,\infty)\setminus A_{-}\). Since \( A^c_{-} =\left(\nu
        a_{\alpha, \gamma_-} ( c ), \nu b_{\alpha, \gamma_-} ( c ) \right) \),
        we have that
        \begin{align*}
          \DTV{ \mathcal{E}(g_1,I) }{ \mathcal{E}(g_2,D) }
          & \leq
          \frac{1}{\Gamma(n/2)}
          \int_{
          \nu a_{\alpha, \gamma_-} ( c )/2d_{+}
          }^{
          \nu b_{\alpha, \gamma_-} ( c )/2d_{+}
          }
          y^{n/2-1}e^{-y}dy
          \\[7pt]
          & = \frac{
          \gamma(n/2, \nu b_{\alpha, \gamma_-} ( c )/(2d_{+}))
          -\gamma(n/2, \nu a_{\alpha, \gamma_-} ( c )/(2d_{+}))
          }{\Gamma(n/2)}
          \\[7pt]
          & = 
          P \left( \frac{\nu b_{\alpha, \gamma_- ( c )}}{2 d_+} ; \frac{n}{2} \right) 
          - P \left( \frac{\nu a_{\alpha, \gamma_- ( c )}}{2 d_+} ; \frac{n}{2} \right),
        \end{align*}
        where \( P ( z ; a ) = \gamma ( z ; a ) / \Gamma ( a ) \) is the lower
        regularized incomplete Gamma function. Assume now that the
        condition~\eqref{liminf-condition} holds. Then we have that
        \begin{equation*}
          0 < a_{\alpha, \gamma_{\pm}} ( c )
          < b_{\alpha, \gamma_{\pm}} ( c )
          < \infty.
        \end{equation*}
        Since
        \(A_{\pm}^c=(\nu a_{\alpha, \gamma_{\pm}} ( c ), \nu b_{\alpha, \gamma_{\pm}} ( c ))\),
        by Proposition~\ref{tv_gen} we have the following bounds on
        \( d(\mathcal{E}(g_1,I), \mathcal{E}(g_2,D))\):
        \begin{align*}
          &
          \DTV{ \mathcal{E}(g_1,I) }{ \mathcal{E}(g_2,D) }
          \\[7pt]
          & \hspace{1cm}
          \leq \frac{2\pi^{n/2}}{\Gamma(n/2)}
          \int_{0}^{\infty}
          \Ind{ r^2\in A_{+}}
          g_1(r^2)
          \, r^{n-1} \, dr
          -\frac{1}{\Gamma(n/2)}
          \int_{0}^{\infty}
          \Ind{ 2d_{+}y\in A_{-} }
          y^{n/2-1}e^{-y}
          \, dy
        \end{align*}
        and
        \begin{align*}
          &
          \DTV{ \mathcal{E}(g_1,I) }{ \mathcal{E}(g_2,D) }
          \\[7pt]
          & \hspace{1cm}
            \geq
            \frac{2\pi^{n/2}}{\Gamma(n/2)}
            \int_{0}^{\infty}
            \Ind{ r^2\in A_{-}}
            g_1(r^2)
            \, r^{n-1} \, dr
            -\frac{1}{\Gamma(n/2)}
            \int_{0}^{\infty}
            \Ind{ 2d_{-}y\in A_{+} }
            y^{n/2-1}e^{-y}
            \, dy.
        \end{align*}
        Since \(f_1\) is a pdf of \(\vk X\), we can rewrite the bounds in terms
        of \(A_{\pm}^c\) as follows:
        \begin{align*}
          &
          \DTV{ \mathcal{E}(g_1,I) }{ \mathcal{E}(g_2,D) }
          \\[7pt]
          & \hspace{1cm}
            \leq \frac{2\pi^{n/2}}{\Gamma(n/2)}
            \int_{0}^{\infty}\Ind{r^2\in A_{+}}
            g_1\left(r^2\right)
            r^{n-1}dr
            -\frac{1}{\Gamma(n/2)}
            \int_{0}^{\infty}
            \Ind{ 2d_{+}y\in A_{-} }
            y^{n/2-1}e^{-y}
            \, dy
          \\[7pt]
          & \hspace{1cm}
            = \frac{1}{\Gamma(n/2)}
            \int_{0}^{\infty}
            \Ind{2d_{+}y\in A^c_{-}}
            y^{n/2-1}e^{-y}dy
            -\frac{2\pi^{n/2}}{\Gamma(n/2)}
            \int_{0}^{\infty}
            \Ind{ r^2\in A^c_{+}}
            g_1(r^2)
            \, r^{n-1} \, dr
          \\[7pt]
          & \hspace{1cm}
            = P \left( \frac{\nu b_{\alpha, \gamma_-}}{2 d_+} ; \frac{n}{2} \right) 
            - P \left( \frac{\nu a_{\alpha, \gamma_-}}{2 d_+} ; \frac{n}{2} \right) 
            -\frac{2\pi^{n/2}}{\Gamma(n/2)}
            \int_{
            \nu a_{\alpha, \gamma_{+}} ( c )
            }^{
            \nu b_{\alpha, \gamma_{+}} ( c )
            }
            g_1(r^2)
            \, r^{n-1} \, dr
        \end{align*}
        Similarly, we obtain that
        \begin{align*}
          &
            \DTV{ \mathcal{E}(g_1,I) }{ \mathcal{E}(g_2,D) }
          \\[7pt]
          & \hspace{1cm}
            \geq 
            P \left( \frac{\nu b_{\alpha, \gamma_+}}{2 d_-} ; \frac{n}{2} \right) 
            - P \left( \frac{\nu a_{\alpha, \gamma_+}}{2 d_-} ; \frac{n}{2} \right) 
            -\frac{2\pi^{n/2}}{\Gamma(n/2)}
            \int_{
            \nu a_{\alpha, \gamma_{-}} ( c )
            }^{
            \nu b_{\alpha, \gamma_{-}} ( c )
            }
            g_1(r^2)
            \, r^{n-1} \, dr.
        \end{align*}
        It remains to calculate the integral of \( g_1 \):
        \begin{align*}
          &
            \frac{2\pi^{n/2}}{\Gamma(n/2)}
            \int_{
            \nu a_{\alpha, \gamma_{\pm}} ( c )
            }^{
            \nu b_{\alpha, \gamma_{\pm}} ( c )
            }
            g_1(r^2)
            \, r^{n-1} \, dr
          \\[7pt]
          & \hspace{1cm}
            = \frac{2\Gamma ( ( \nu + n ) / 2 )}{\Gamma(n/2)\Gamma ( \nu / 2 ) \nu^{n/2}}
            \int_{
            \nu a_{\alpha, \gamma_{\pm}} ( c )
            }^{
            \nu b_{\alpha, \gamma_{\pm}} ( c )
            }
            \left[ 1 + \frac{r^2}{\nu}\right]^{-( \nu + n ) / 2}
            \, r^{n-1} \, dr
          \\[7pt]
          & \hspace{1cm}
            = \frac{\Gamma ( ( \nu + n ) / 2 )}{\Gamma(n/2)\Gamma ( \nu / 2 )}
            \int_{
            \nu a^2_{\alpha, \gamma_{\pm}} ( c )
            }^{
            \nu b^2_{\alpha, \gamma_{\pm}} ( c )
            }
            \frac{t^{n / 2 - 1}}{(1 + t)^{n / 2 + \nu / 2} }
            \, dt
          \\[7pt]
          & \hspace{1cm}
            = \frac{\Gamma ( ( \nu + n ) / 2 )}{\Gamma(n/2)\Gamma ( \nu / 2 )}
            \left(
              B \left( \frac{1}{1 + \nu a^2_{\alpha, \gamma_{\pm }}} ; \frac{\nu}{2}, \frac{n}{2} \right)
              -B \left( \frac{1}{1 + \nu b^2_{\alpha, \gamma_{\pm }}} ; \frac{\nu}{2}, \frac{n}{2} \right)
            \right)
          \\[7pt]
          & \hspace{1cm}
            =
            I \left( \frac{1}{1 + \nu a^2_{\alpha, \gamma_{\pm }}} ; \frac{\nu}{2}, \frac{n}{2} \right)
            -I \left( \frac{1}{1 + \nu b^2_{\alpha, \gamma_{\pm }}} ; \frac{\nu}{2}, \frac{n}{2} \right)
        \end{align*}
        where we used the following representation of the incomplete Beta function:
        \begin{equation*}
          B ( x ; a, b ) 
          = \int_{\frac{1-x}{x}}^\infty 
          \frac{t^{b - 1}}{(1 + t)^{a + b}} \, dt,
        \end{equation*}
        as well as
        \begin{equation*}
          \frac{\Gamma ( a + b )}{\Gamma ( a ) \, \Gamma ( b )} 
          \, B ( x ; a, b )
          = 
          \frac{B ( x; a, b )}{B ( a, b )}
          =
          I ( x; a, b ),
        \end{equation*}
        where \( I \) is the regularized incomplete Beta function.

      \end{proof}

      \subsection{Proof of Theorem~\ref{main_KL}}
      Let \(\vk X\sim \mathcal{E}(g_1,I)\) and
      \(\vk Y\sim \mathcal{E}(g_2,D)\).
      %
      By definition of \(\DKL\) we obtain that
      \begin{align*}
        &
          \DKL{ \mathcal{E}(g_1,I) }{ \mathcal{E}(g_2,D) }
        \\[7pt]
        & \hspace{1cm}
          =
          \frac{1}{2}\sum_{i=1}^{n}\ln d_i+\frac{n}{2}\ln(2\pi)
          +\frac{\nu}{\nu-2}\sum_{i=1}^{n}\frac{1}{2d_{i}}
          +\frac{2\pi^{n/2}}{\Gamma(n/2)}
          \int_{0}^{\infty}
          g_1(r^2)\ln\left(g_1(r^2)\right)
          \, r^{n-1} \, dr,
      \end{align*}
      where we used that \(\mathbb{E}\{X_{1}^2\}=\nu/(\nu-2)\) and \(g_2(t) = 1/(2\pi)^{n/2}e^{-t/2}\).
      %
      %
      The last integral on the right evaluates to
      \begin{equation*}
        -\frac{1}{2} (\nu+n) \left(
          \psi_{0}\left(\frac{\nu+n}{2}\right)
          -\psi_{0}\left(\frac{\nu}{2}\right)
        \right)
        + \ln\left(
          \frac{\Gamma ( ( \nu + n ) / 2 )}{\Gamma ( \nu / 2 ) \, \pi^{n / 2}\nu^{n/2}}
        \right).
      \end{equation*}
      Next, we estimate \(\DKL{ \mathcal{E}(g_2,D) }{ \mathcal{E}(g_1,I) }\). By
      \Cref{kl_gen} we have that
      \begin{align*}
        &
          \DKL{ \mathcal{E}(g_2,D) }{ \mathcal{E}(g_1,I) }
        \\[7pt]
        & \hspace{1cm}
          \geq
          \max\left(
          0,
          -\frac{1}{2^{n/2-1}\Gamma(n/2)}
          \int_{0}^{\infty}
          e^{-r^2/2}
          \ln(g_1(d_{-}r^2))
          \, r^{n-1} \, dr
          -\frac{n}{2}
          -\frac{n}{2}\ln(2\pi)
          -\frac{1}{2}\sum_{i=1}^{n}\ln d_i
          \right)
      \end{align*}
      and
      \begin{align*}
        &
          \DKL{ \mathcal{E}(g_2,D) }{ \mathcal{E}(g_1,I) }
        \\[7pt]
        & \hspace{1cm}
          \leq
          -\frac{1}{2^{n/2-1}\Gamma(n/2)}
          \int_{0}^{\infty}
          e^{-r^2/2}
          \ln(g_1(d_{+}r^2))
          \, r^{n-1} \, dr
          -\frac{n}{2}
          -\frac{n}{2}\ln(2\pi)
          -\frac{1}{2}\sum_{i=1}^{n}\ln d_i.
      \end{align*}

      It remains to calculate the first term on the right.
      %
      %
      We have
      \begin{align*}
        &
        \frac{1}{2^{n/2-1} \Gamma(n/2)}
        \int_{0}^{\infty}
        e^{-r^2/2}
        \ln(g_1(d_{\mp}r^2))
        \, r^{n-1} \, dr
        \\[7pt]
        & \hspace{1cm}
        = 
        -\frac{\nu+n}{2^{n/2}\Gamma(n/2)}
        \int_{0}^{\infty}
        e^{-r^2/2}
        \ln\left(
          1+\frac{d_{\mp}r^2}{\nu}
          \right)
        \, r^{n-1} \, dr
        + \ln\left(
          \frac{
            \Gamma ( ( \nu + n ) / 2 )
            }{
            \Gamma ( \nu / 2 ) \, \pi^{n / 2}\nu^{n/2}
            }
          \right)
        \\[7pt]
        & \hspace{1cm}
        = 
        -\frac{\nu+n}{2 \Gamma(n/2)}
        \int_{0}^{\infty}
        e^{-y}
        \ln\left(
          1+\frac{2 d_{\mp} y}{\nu}
          \right)
        \, y^{n / 2-1} \, dy
        + \ln\left(
          \frac{
            \Gamma ( ( \nu + n ) / 2 )
            }{
            \Gamma ( \nu / 2 ) \, \pi^{n / 2}\nu^{n/2}
            }
          \right)
      \end{align*}
      Denote \( \widetilde{\Delta}_{\mp, n} = \int_0^\infty e^{-y} \ln \left( 1
      + \frac{2 d_\mp y}{\nu} \right) \, y^{n / 2 - 1} \, dy \). We claim that
      \( \widetilde{\Delta}_{\mp, n} = \Delta_{\mp, n} \).
      For \( n = 1 \) and \( n = 2 \), the identities
      \begin{equation*}
        \widetilde{\Delta}_{\mp , n}
       = 
        \begin{dcases}
        \sqrt{\pi} \left( 
          \pi \erfi \left( \sqrt{ \frac{\nu}{2 d_{\mp }} }  \right) 
          + \ln \left( \frac{d_{\mp}}{2 \nu} \right)
          - \gammaEM
        \right) 
        - \frac{\sqrt{ \pi } \nu}{d_{\mp}}
        {}_2 F_2 \left( 
        1, 1; \frac{3}{2}, 2; \frac{\nu}{2 d_\mp}
        \right),
        & \text{if } n = 1,
        \\
        - e^{\nu / 2 d_{\mp}} 
        \Ei \left( -\frac{\nu}{2 d_{\mp}}\right),
        & \text{if } n = 2
        \end{dcases}
      \end{equation*}
      may be checked directly. To prove the case for all \( n \in \N \), we use
      mathematical induction. Assume that \(n\geq 3\) and
      \(\widetilde{\Delta}_{\mp, k} = \Delta_{\mp, k}\) for all \(k \leq n -
      1\). Let us show that it holds for \(k = n\). Using integration by parts,
      we obtain
      \begin{align*}
        &
        \int_{0}^{\infty}
        e^{-y}
        \ln\left(
          1+\frac{2d_{\mp}y}{\nu}
          \right)
          y^{n/2-1} dy
        \\[7pt]
        & \hspace{1cm}
        =
        -e^{-y}
        \ln\left(
          1+\frac{2d_{\mp}y}{\nu}
          \right)
          y^{n/2-1}\ \Bigg|_{0}^{\infty}
        + \left(\frac{n}{2}-1\right)
        \int_{0}^{\infty}
        e^{-y}
        \ln\left(1+\frac{2d_{\mp}y}{\nu}\right)
        y^{(n-2)/2-1} dy
        \\[7pt]
        & \hspace{1cm} \phantom{=} \
        + \frac{2d_{\mp}}{\nu}
        \int_{0}^{\infty}
        e^{-y}
        \left(1+\frac{2d_{\mp}y}{\nu}\right)^{-1}
        y^{n /2-1} dy
        \\[7pt]
        & \hspace{1cm}
        \eqqcolon \left(\frac{n}{2}-1\right)
        \widetilde{\Delta}_{\mp, n-2}
        + \widetilde{\alpha}_{\mp,n-2}
        \\[7pt]
        & \hspace{1cm}
        = \left(\frac{n}{2}-1\right)
        \left(
          \left(\frac{n}{2}-2\right)
          \Delta_{\mp, n-4}
          + \alpha_{\mp,n-4}
        \right)
        +\alpha_{\mp,n-2}
        \\[7pt]
        & \hspace{1cm} 
        = 
        \left(
          \prod_{i=1}^{k}\left(\frac{n}{2}-i\right)
        \right)
        \Delta_{\mp, n-2k}
        + \sum_{l=1}^{k} \left(
          \prod_{i=1}^{l-1}\left(\frac{n}{2}-i\right)
        \right)
        \alpha_{\mp,n-2l}
        \\[7pt]
        & \hspace{1cm}
        = \frac{\Gamma(n/2)}{\Gamma(n/2-k)}
        \Delta_{\mp, n-2k}
        + \Gamma(n/2)\sum_{l=1}^{k} 
        \frac{\alpha_{\mp,n-2l}}{\Gamma(n/2 - l + 1)},
      \end{align*}
      for all \(0<k<n/2\), where
      \[
        \widetilde{\alpha}_{\mp,n-2}
        = \frac{2d_{\mp}}{\nu}
        \int_{0}^{\infty}
        e^{-y}
        \left(1+\frac{2d_{\mp}y}{\nu}\right)^{-1}
        y^{n/2-1} dy
        = \alpha_{\mp,n-2}
      \]
      for all \(n\geq 1\). Hence, using mathematical induction one can obtain
      that for even \(n\)
      \begin{equation}
        \widetilde{\Delta}_{\mp, n}
        = \Gamma(n/2)\Delta_{\mp, 2}
        +\Gamma(n/2)
        \sum_{l=1}^{n/2-1}
        \frac{\alpha_{\mp,n-2l}}{\Gamma(n/2-l + 1)}
        = \Delta_{\mp, n}
      \end{equation}
      and if \(n\) is odd, then
      \begin{equation}
        \widetilde{\Delta}_{\mp, n}
        = \frac{\Gamma(n/2)}{\sqrt{\pi}}
        \Delta_{\mp, 1}
        +\Gamma(n/2)
        \sum_{l=1}^{(n-1)/2}
        \frac{\alpha_{\mp,n-2l}}{\Gamma(n/2-l+1)}
        =\Delta_{\mp, n}.
      \end{equation}
      Thus, \Cref{main_KL} follows.

      \bibliographystyle{ieeetr}
      \bibliography{total_variations.bib}
    \end{enumerate}
  \end{document}